\newtheorem{theorem}{Theorem}[section]
\newtheorem{lemma}[theorem]{Lemma}\newtheorem{corollary}[theorem]{Corollary}
\theoremstyle{definition}
\theoremstyle{remark}
\newtheorem{remark}[theorem]{Remark}
\numberwithin{equation}{section}
\newcommand{\dint}{\displaystyle\int}
\begin{document}
\title[Nonlinear Schr\"odinger equation on a star-shaped network]{Exponential stability for the nonlinear Schr\"odinger equation on a star-shaped network}

\author{Ka\"{\i}s Ammari}
\address{UR Analysis and Control of PDEs, UR13ES64, Department of Mathematics, Faculty of Sciences of Monastir,
University of Monastir, 5019 Monastir, Tunisia}
\email{kais.ammari@fsm.rnu.tn}

\author{Ahmed Bchatnia}
\address{UR Analyse non-lin\'eaire et g\'eom\'etrie, UR13ES32, Department of
Mathematics, Faculty of Sciences of Tunis, University of Tunis El-Manar,
2092 El Manar II, Tunisia}
\email{ahmed.bchatnia@fst.utm.tn}

\author{Naima Mehenaoui}
\address{Department of
Mathematics, Faculty of Sciences of Tunis, University of Tunis El-Manar,
2092 El Manar II, Tunisia}
\email{naima.mehenaoui@fst.utm.tn }

\begin{abstract}
In this paper, we prove the exponential stability of the solution of the nonlinear dissipative Schr\"odinger equation on a star-shaped
network and where the damping is localized on one branch and at the infinity.
\end{abstract}

\date{}
\subjclass[2010]{35L05, 34K35}
\keywords{Exponential stability, dissipative Schr\"odinger equation, star-shaped network}
\maketitle
\tableofcontents


\section{Introduction}
Dispersive models have long been a question of great interest in a wide range of researchers. One of the most significant current discussions in these models is the nonlinear Schr\"{o}dinger equation. This equation has been studied extensively since the early years of this century. Most of these studies have mainly concentrated on the well-posedness questions, see, for instance \cite{Kato} and stabilization of the energy. The author in \cite{Nat1,Nat2} established an exponential decay rate of the energy in $L^{2}$-level for the nonlinear Schr\"{o}dinger equation with localized damping.

\medskip

In this paper, we derive analogous exponential decay rate in $L^{2}$-level for the nonlinear dissipative Schr\"{o}dinger equation on a star-shaped network, as in Figure \ref{fig}, and where the damping is localized on one branch and at the infinity. More precisely we consider the following initial and boundary-value problem:
\begin{equation}\label{sys}
\left\{
\begin{array}{lll}
i\, \partial_t u_{1}  + \partial^2_x u_{1} + \lambda u_1|u_1|^{\alpha -1} + i a(x) u_1 = 0 \;
\mbox{for} \; x > 0, \, t > 0,\\

 i\, \partial_t u_{i} + \partial^2_x u_{i} + \lambda u_i |u_i|^{\alpha -1} = 0 \;
\mbox{for} \; x > 0, \, t > 0, \; 2\leq i \leq N,\\

\mathlarger{‎‎\sum}_{i=1}^N \partial_x u_i (t,0)=0,  \, t>0,\\

u_i(t,0)=u_j(t,0), \,  \forall \, t>0, \, 1 \leq i,j \leq N, \\

u_i(0,x)=\varphi_i(x),
 \;\; x>0,\;\; 1\leq i \leq N, \\
\end{array}\right.
\end{equation}
where $N \in \mathbb{N}^*, N \geq 3,\; \lambda\in\mathbb{R}^*$ and $\alpha\in \{3, 5\}$ will be treated in this paper. The presence of the damping term in (\ref{sys}) is responsible for the localized mechanism of dissipation of the system since the function $a=a(x)$ is assumed to be in $L^{\infty}(\mathbb{R}_+)$, almost everywhere non-negative function, and to satisfy for some $R>0$, and $\alpha_{0}>0$,
\begin{equation}
\label{dampingh}
  a(x) \geq \alpha_0 >0 \quad \text{for}\; \vert x\vert >R.
\end{equation}
The considered graph consists of a finite number of edges of infinite length attached to a common vertex, each of them being identified with a copy of the positive semi-axis.
In the context of nonlinear Schr\"{o}dinger equation on a star-shaped network, Ali Mehmeti, Ammari and Nicaise in \cite{amm1bis} have proved $L^{\infty}$-time decay estimates. Banica and Ignat in \cite{Banica} proved the same results in the case of trees with the last generation of edges of infinite strips,
with Kirchhoff coupling condition at the vertices. With the same conditions, dispersive estimates were obtained in the case of the tadpole graph in \cite{amm1}.
  The motivation for studying nonlinear propagation in ramified structures comes from several branches of pure and applied science, modeling phenomena such as nonlinear electromagnetic pulse propagation in optical fibers, the hydrodynamic flow, electrical signal propagation in the nervous system, etc.

\medskip

Before a precise statement of our main results, let us introduce some definitions and notations about 1-d networks which will be used throughout the rest of the paper.

\medskip

Let $\mathcal{R}_i$, $i = 1,2,...,N$ be $N$ disjoint sets identified with to $(0, +\infty)$. We set $\mathcal{R}:= \mathlarger{\overset{N}{\underset{i=1}{\cup}}} \overline{\mathcal{R}_k}$. We denote by $f = (f_k)_{k=1,2,...,N} = (f_1, f_2, ..., f_N)$ the functions on $\mathcal{R}$ taking their values in $\mathbb{C}$ and let $f_k$ be the restriction of $f$ to $\mathcal{R}_k$.\\
Define the Hilbert space $\mathcal{H} =\mathlarger{\overset{N}{\underset{i=1}{\oplus}}}L^2(\mathcal{R}_k) = L^2(\mathcal{R})$ with inner product
$$((u_k), (v_k))_{\mathcal{H}} = \mathlarger{‎‎\sum}_{i=1}^N(u_k, v_k)_{L^2(\mathcal{R}_k)}.$$

\begin{center} \label{fig}
\includegraphics[scale=1.40]{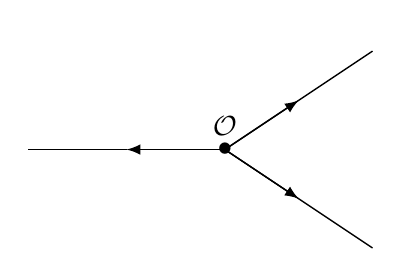}
\captionof{figure}{Star-Shaped Network for $N =3$}
\end{center}

The energy identity obtained from $(\ref{sys})$, by simple formal calculations, is given by
\begin{equation}\label{E}
E_u (t) := \mathlarger{‎‎\sum}_{i=1}^N E_{u_i} (t) = - 2 \, \int_0^t\int_{\mathbb{R}_+} a(x)|u_1(s,x)|^2 \, dxds
+ \mathlarger{‎‎\sum}_{i=1}^N \| \varphi_i \|^2_{L^2(\mathbb{R_+})}
\end{equation}
$$
= - 2 \, \int_0^t\int_{\mathbb{R}_+} a(x)|u_1(s,x)|^2 \, dxds + E_u (0), \, \forall \, t \geq 0,
$$
where $u = (u_1,...,u_N)$ and $E_{u_i}(t)$ is the energy of $u_i$, $1\leq i \leq N,$ defined by $\displaystyle{ E_{u_i}(t):=\int_0^{+\infty}|u_i(x, t)|^2 \, dx.}$\\
Our major concern will be to prove the exponential decay of the global energy at the infinity. More precisely we have the following theorem:
\begin{theorem}\label{theodec}
Consider $\alpha\in \{3,5\}$, a function $a$ satisfying assumption (\ref{dampingh}) and the initial data $\varphi$ in $L^2(\mathcal{R})$. For any solution $u$ of the system (\ref{sys}), there exist $c>0$ and $\omega>0$ such that:
\begin{equation}\label{decay}
E_u (t)\leq c \, e^{-\omega t}, \;\; \mbox{ for all } \; t\geq 0,
\end{equation}
provided the initial data satisfies $\left\|\varphi\right\|_{L^2(\mathcal{R})}\ll 1,$ if one considers the case $\alpha=5.$

\end{theorem}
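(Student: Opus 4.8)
The plan is to follow the classical observability/stabilization strategy for damped dispersive equations, adapted to the network structure.

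\medskip

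\noindent\textbf{Step 1: Reduction to an observability inequality.}
Because of the energy identity (\ref{E}), it suffices to prove that there exist $T>0$ and $C>0$ such that every solution satisfies the observability estimate
\begin{equation*}
E_u(0) \leq C \int_0^T\!\!\int_{\mathbb{R}_+} a(x)\,|u_1(s,x)|^2\,dx\,ds .
\end{equation*}
Indeed, combined with (\ref{E}) this gives $E_u(T)\leq \bigl(1-\tfrac{2}{C}\bigr)E_u(0)$ (after adjusting $C\geq 2$), hence a uniform decay rate on each interval $[kT,(k+1)T]$ by the semigroup property (here one uses that the flow map is well-defined and that the energy is nonincreasing, which follows from (\ref{E}) and the well-posedness invoked in the Introduction), and the exponential bound (\ref{decay}) follows with $\omega=-\tfrac1T\log(1-\tfrac2C)$ and an appropriate $c$.

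\medskip

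\noindent\textbf{Step 2: From damping to a compact neighbourhood of the vertex.}
Using (\ref{dampingh}), the right-hand side controls $\int_0^T\int_{|x|>R}|u_1|^2$. The next step is a propagation/unique-continuation argument on the first branch to upgrade this into control of $\int_0^T\int_0^R |u_1|^2$ as well, so that in fact $\int_0^T\int_{\mathbb{R}_+}|u_1|^2$ is controlled. For $\alpha=3$ this is essentially the Morawetz/multiplier computation of Natali \cite{Nat1,Nat2} localized to the edge $\mathcal{R}_1$; for $\alpha=5$ the smallness hypothesis $\|\varphi\|_{L^2(\mathcal{R})}\ll1$ is what makes the nonlinear term subcritical enough to be absorbed.

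\medskip

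\noindent\textbf{Step 3: Transfer of observation across the vertex.}
This is the genuinely new, network-specific step and the one I expect to be the main obstacle. Having observed $u_1$ on all of $\mathcal{R}_1$ over $[0,T]$, one must recover $E_{u_i}(0)$ for $i\geq 2$. The idea is to use the Kirchhoff-type transmission conditions $\sum_i\partial_x u_i(t,0)=0$ and $u_i(t,0)=u_j(t,0)$: knowledge of $u_1$ near $x=0$ gives the trace $u_1(t,0)=u_j(t,0)$ and, via a local regularity estimate for the Schrödinger equation on the half-line, also $\partial_x u_1(t,0)$, hence $\sum_{i\geq 2}\partial_x u_i(t,0)$. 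One then sets up an energy/multiplier identity on each undamped branch $\mathcal{R}_i$ (multiplying the equation by $x\,\partial_x\bar u_i$ and by $\bar u_i$ and integrating by parts, i.e. the Morawetz $x\partial_x$-multiplier on the semi-infinite edge) to express $E_{u_i}(0)$ in terms of boundary terms at $x=0$ and space-time integrals that are either controlled by the vertex data or absorbed using the smallness of the data. Summing over $i$ and combining with Step 2 closes the observability inequality. The delicate points are: (i) justifying the trace estimates and the multiplier manipulations for merely $L^2$ data, which requires working with the semigroup at regularized level and passing to the limit; (ii) controlling the cross terms coming from the nonlinearity on branches $2,\dots,N$, where there is no damping at all — here one must combine the conservation-law structure (the energy identity already shows $E_u$ is bounded by $E_u(0)$) with the Gagliardo–Nirenberg estimate on the half-line, and it is precisely the borderline case $\alpha=5$ that forces $\|\varphi\|_{L^2(\mathcal{R})}\ll1$; and (iii) handling the possibility, familiar from observability arguments, that the constant degenerates — ruled out by a compactness–uniqueness argument reducing to a unique continuation property for the stationary network problem.

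\medskip

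\noindent In short: reduce to observability (Step 1), use the damping plus a Morawetz estimate on the damped edge to control $u_1$ on the whole first branch (Step 2), then transfer this observation through the Kirchhoff vertex to the remaining branches by a branch-by-branch multiplier identity, absorbing the nonlinear contributions via smallness of the data in the quintic case (Step 3); the network transfer in Step 3 is where the real work lies.
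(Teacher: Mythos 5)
Your Step 1 and your closing bootstrap coincide with the paper's final argument: the paper combines Lemma \ref{lem1} and Lemma \ref{lem2} into $\int_0^T E_u(s)\,ds\le C(T)\bigl(E_u(0)-E_u(T)\bigr)$, then uses that $E_u$ is nonincreasing (from \eqref{E}) and the semigroup property to get \eqref{decay}. But the substance of the theorem is the observability-type inequality itself, and there your route both differs from the paper's and is not actually carried out. The paper uses no Morawetz/virial multipliers at all: the key estimate \eqref{obs} is proved by a compactness--uniqueness contradiction argument in the style of Natali \cite{Nat1,Nat2} --- one bounds $\partial_t u_1^k$ in $L^2((0,T);H^{-2}_{loc})$, applies Aubin--Lions (Lemma \ref{lem4}) to get strong convergence of $u_1^k$ on $(0,T)\times(0,R)$, passes to the limit in the equation, and kills the limit using the $C^\infty$ smoothing result (Theorem \ref{Th3}) together with the Escauriaza--Kenig--Ponce--Vega unique continuation theorem (Theorem \ref{Th4}) in the nonlinear case and Holmgren's theorem in the rescaled linear case. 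Your description of Natali's method as a localized Morawetz computation is inaccurate, and your fallback ``compactness--uniqueness reducing to a unique continuation property for the stationary network problem'' has no counterpart here: for Schr\"odinger the uniqueness step is performed on the evolution equation over a time slab, not on a stationary problem.

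The genuine gap is Step 3, which you correctly identify as the heart of the matter but only sketch, and whose mechanism cannot work as described. First, the transmission conditions give you only the common Dirichlet trace $u_i(t,0)=u_1(t,0)$ and the \emph{sum} $\sum_{i\ge 2}\partial_x u_i(t,0)=-\partial_x u_1(t,0)$; the boundary terms generated by your multiplier identities on each undamped edge involve the individual quantities $\mathrm{Im}\bigl(\bar u_i\,\partial_x u_i\bigr)(t,0)$ or $|\partial_x u_i(t,0)|^2$, which are not determined by those data, so the branch-by-branch identities cannot be closed. Second, and more fundamentally, the stated goal of Step 3 --- recovering $E_{u_i}(0)$ for $i\ge 2$ from observation of $u_1$ (equivalently from vertex data) on a fixed time interval --- is unattainable by any direct quantitative estimate: an $L^2$ wave packet placed far out on an undamped edge and moving away from the vertex has unit energy but leaves an arbitrarily small imprint at the vertex and on the damped edge during $[0,T]$, so no inequality of the form $E_{u_i}(0)\le C(\text{vertex/damping data on }[0,T])$ can hold with a uniform constant. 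This is precisely why the paper works with the time-integrated quantity $\int_0^T E_{u_i}(s)\,ds$ in \eqref{obs} and proves it only through the contradiction/compactness scheme rather than through explicit multipliers; any correct completion of your plan would have to abandon the direct vertex transfer and replace it by an argument of that type, i.e.\ essentially the paper's proof. As it stands, your proposal reduces the theorem to an observability inequality but does not prove it.
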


The paper is organized as follows. In Section 2, we prove that the system (\ref{sys}) is globally well-posed in the energy space  $L^2(\mathcal{R})$. Section 3 is devoted to the proof of the main result. Technical results are collected in the appendix.
\section{Well-posedness}
In this section, we show the global well-posedness in $L^{2}$ of the problem $(\ref{sys})$ for initial data in $L^{2}(\mathcal{R})$, by combining the techniques due to Kato, established in \cite{Cazenav} and \cite{linares}.
We first recall the following Strichartz estimates, see \cite{Cazenav,linares} for more details.
\begin{theorem}\label{strichartz}
The group $\left(e^{it\partial_{x}^{2}}\right)_{t \in \mathbb{R}}$ associated to the Schr\"{o}dinger equation satisfies the following properties:
\begin{enumerate}
\item[1)] $\left(\int_{-\infty}^{+\infty}\left\Vert e^{it\partial_{x}^{2}}f \right\Vert_{L^{p}}^{q}dt \right)^{\frac{1}{q}}\leq c \Vert f \Vert_{L^{2}}, $
\item[2)] $\left(\int_{-\infty}^{+\infty}\left\Vert\int_{-\infty}^{+\infty} e^{i(t-s)\partial_{x}^{2}}h(.,s)ds \right\Vert_{L^{p}}^{q}dt \right)^{\frac{1}{q}}\leq c \left(\int_{-\infty}^{+\infty}\left\Vert h(.,s) \right\Vert_{L^{p'}}^{q'}dt \right)^{\frac{1}{q'}},$
\item[3)] $\left\Vert\int_{-\infty}^{+\infty} e^{it\partial_{x}^{2}}h(.,t)dt \right\Vert_{L^{2}}\leq c \left(\int_{-\infty}^{+\infty}\left\Vert h(.,s) \right\Vert_{L^{p'}}^{q'}dt \right)^{\frac{1}{q'}}.$
\end{enumerate}
In all cases, we have $2\leq p\leq+\infty$ and
\begin{equation}\label{couple}
\frac{2}{q}=\frac{1}{2}-\frac{1}{p}.
\end{equation}
Here, constant $c>0$ depends only on $p$.
\end{theorem}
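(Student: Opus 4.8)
The three inequalities are tied together by the $TT^{*}$ formalism, so the plan is to establish the inhomogeneous bound 2) as the master estimate and then read off 1) and 3) from it. Introduce the solution operator $T\colon L^{2}(\R)\to L^{q}_{t}L^{p}_{x}$ defined by $(Tf)(t)=e^{it\partial_{x}^{2}}f$. Computing its adjoint gives $T^{*}h=\int_{-\infty}^{+\infty}e^{-is\partial_{x}^{2}}h(\cdot,s)\,ds$, whence 1) is the statement $\|T\|\le c$, 3) is $\|T^{*}\|\le c$ (the stated sign $e^{it\partial_{x}^{2}}$ versus $e^{-it\partial_{x}^{2}}$ being immaterial, since the substitution $t\mapsto -t$ preserves the $L^{q'}_{t}L^{p'}_{x}$ norm and the propagator is unitary), and the composition $TT^{*}h=\int_{-\infty}^{+\infty} e^{i(t-s)\partial_{x}^{2}}h(\cdot,s)\,ds$ is exactly the operator appearing in 2). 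Because $\|T\|^{2}=\|T^{*}\|^{2}=\|TT^{*}\|$, it suffices to prove $\|TT^{*}\|\le c$, namely 2).

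The analytic heart is the pointwise-in-time dispersive estimate. From the explicit Gaussian kernel of the one-dimensional free propagator, $e^{it\partial_{x}^{2}}f(x)=(4\pi i t)^{-1/2}\int_{\R} e^{i|x-y|^{2}/(4t)}f(y)\,dy$, one reads off $\|e^{it\partial_{x}^{2}}f\|_{L^{\infty}}\le c\,|t|^{-1/2}\|f\|_{L^{1}}$, while Plancherel yields the conservation law $\|e^{it\partial_{x}^{2}}f\|_{L^{2}}=\|f\|_{L^{2}}$ since the Fourier multiplier $e^{-it\xi^{2}}$ is unimodular. Riesz--Thorin interpolation between these two endpoints produces, for every $2\le p\le +\infty$ with conjugate exponent $p'$,
\[
\|e^{it\partial_{x}^{2}}f\|_{L^{p}}\le c\,|t|^{-(1/2-1/p)}\,\|f\|_{L^{p'}}.
\]

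To prove 2) I would apply Minkowski's integral inequality followed by this decay estimate to get, for fixed $t$,
\[
\|TT^{*}h(t)\|_{L^{p}}\le c\int_{-\infty}^{+\infty}|t-s|^{-\gamma}\,\|h(\cdot,s)\|_{L^{p'}}\,ds,\qquad \gamma:=\tfrac12-\tfrac1p .
\]
The right-hand side is a convolution in the time variable of the scalar function $s\mapsto\|h(\cdot,s)\|_{L^{p'}}$ against the kernel $|t|^{-\gamma}$. Taking the $L^{q}_{t}$-norm and invoking the one-dimensional Hardy--Littlewood--Sobolev inequality bounds it by $c\,\|h\|_{L^{q'}_{t}L^{p'}_{x}}$, the exponents being compatible precisely when $\gamma=\tfrac2q$, which is exactly the admissibility relation (\ref{couple}). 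This closes 2), and the $TT^{*}$ identity then yields 1) and 3).

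The interpolation and the duality bookkeeping are routine; the step requiring care is the exponent matching in Hardy--Littlewood--Sobolev. One must check $0<\gamma<1$ so the inequality applies on the admissible range, observe that $p=2$ (hence $q=\infty$, $\gamma=0$) is the trivial case settled by unitarity and Minkowski alone, and verify that the endpoint $p=+\infty$ (hence $q=4$, $\gamma=\tfrac12$) still lies within the scope of HLS with $r=q'=\tfrac43\in(1,+\infty)$; at this endpoint one deduces 1) and 3) by the explicit H\"older pairing of $L^{q}_{t}L^{p}_{x}$ with $L^{q'}_{t}L^{p'}_{x}$ rather than by appealing to reflexivity. I expect the only genuine obstacle to be this exponent bookkeeping, since the unimodularity of the Schr\"odinger multiplier and the explicit $|t|^{-1/2}$ kernel make both endpoint bounds immediate.
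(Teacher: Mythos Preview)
Your argument is the standard $TT^{*}$ derivation of the one-dimensional Strichartz estimates and is correct as written: the dispersive bound $\|e^{it\partial_{x}^{2}}f\|_{L^{\infty}}\le c|t|^{-1/2}\|f\|_{L^{1}}$ comes from the explicit kernel, interpolation with the $L^{2}$ conservation gives the $L^{p'}\to L^{p}$ decay, and Hardy--Littlewood--Sobolev in the time variable closes inequality 2) under precisely the admissibility relation $\tfrac{2}{q}=\tfrac12-\tfrac1p$; the $TT^{*}$ identity then yields 1) and 3). Your exponent bookkeeping at the endpoints $p=2$ and $p=+\infty$ is also correct.

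As for comparison with the paper: there is nothing to compare. The paper does not prove this theorem; it merely recalls the statement and refers the reader to \cite{Cazenav,linares} for the proof. The argument you have supplied is essentially the proof one finds in those references, so your proposal is consistent with---indeed fills in---what the paper cites.
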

As a consequence we have the following (see \cite{Cazenav} and \cite{linares}).
\begin{corollary}\label{str}
Consider $(p,q)$  and $(p_{0},q_{0})$ two pairs of constants satisfying condition $(\ref{couple})$. One has
\begin{equation}
 \left(\int_{-T}^{T}\left\Vert\int_{-T}^{T} e^{i(t-s)\partial_{x}^{2}}h(.,s)ds \right\Vert_{L^{p_{0}}}^{q_{0}}dt \right)^{\frac{1}{q_{0}}}\leq c \left(\int_{-T}^{T}\left\Vert h(.,s) \right\Vert_{L^{p'}}^{q'}dt \right)^{\frac{1}{q'}},
\end{equation}
where $c=c(p,p_{0})>0.$
\end{corollary}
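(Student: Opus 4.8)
The plan is to prove the corollary by the $TT^\ast$ (dualization) method, exploiting the group law to factor the non-retarded inhomogeneous operator as a composition of the homogeneous propagator with its adjoint. First I would introduce the operator $T\colon L^2(\mathbb{R})\to L^{q_0}_t L^{p_0}_x$ given by $Tf = e^{it\partial_x^2}f$. By part (1) of Theorem~\ref{strichartz} applied with the admissible pair $(p_0,q_0)$ (which satisfies \eqref{couple} by hypothesis), $T$ is bounded with $\|T\|\le c(p_0)$. Its formal adjoint is $T^\ast h = \int_{\mathbb{R}} e^{-is\partial_x^2} h(\cdot,s)\,ds$, and part (3) of Theorem~\ref{strichartz} applied with the pair $(p,q)$ asserts precisely that $T^\ast\colon L^{q'}_t L^{p'}_x \to L^2(\mathbb{R})$ is bounded with $\|T^\ast\|\le c(p)$.

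Using the group property $e^{i(t-s)\partial_x^2} = e^{it\partial_x^2}\,e^{-is\partial_x^2}$, the operator appearing in the statement is exactly
$$h \longmapsto \int_{\mathbb{R}} e^{i(t-s)\partial_x^2} h(\cdot,s)\,ds = e^{it\partial_x^2}\Bigl(\int_{\mathbb{R}} e^{-is\partial_x^2} h(\cdot,s)\,ds\Bigr) = T\bigl(T^\ast h\bigr).$$
Hence it factors as $T T^\ast\colon L^{q'}_t L^{p'}_x \to L^{q_0}_t L^{p_0}_x$ and is bounded with operator norm at most $c(p_0)\,c(p) =: c(p,p_0)$, which establishes the claimed inequality over the whole real line.

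To pass to the finite interval with a constant independent of $T$, I would apply the whole-line bound to the truncated datum $\mathbf{1}_{[-T,T]}(s)\,h(\cdot,s)$ and then restrict the outer time integration to $[-T,T]$: shrinking the domain of the $L^{q_0}_t$-integral only decreases the left-hand side, while the $L^{q'}_t$-norm of the truncated datum equals the $[-T,T]$-norm of $h$, so the constant $c(p,p_0)$ is preserved and, crucially, independent of $T$. The only point needing care is the sign convention in the adjoint: one must verify that part (3) applies verbatim to $e^{-is\partial_x^2}$, which follows from the unitarity of the group on $L^2$ together with invariance of the norms under complex conjugation. I emphasize that no retarded-kernel or Christ–Kiselev argument is required here, precisely because the inner integral runs over the full interval $[-T,T]$ rather than up to time $t$; the main subtlety is therefore the bookkeeping of the two admissible pairs so that the homogeneous and dual estimates compose to a bound depending only on $(p,p_0)$.
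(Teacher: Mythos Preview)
Your argument is correct. The non-retarded inhomogeneous estimate does factor as $T\circ T^\ast$ via the group law, and parts (1) and (3) of Theorem~\ref{strichartz} give exactly the two bounds needed, yielding a constant $c(p)c(p_0)$; the truncation to $[-T,T]$ by inserting $\mathbf{1}_{[-T,T]}(s)$ and shrinking the outer integral is the right way to preserve $T$-independence of the constant. The one cosmetic issue is that you overload the symbol $T$ for both the propagator and the time horizon; renaming the operator (say $S$) would avoid confusion with the interval $[-T,T]$.

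As for the comparison: the paper does not actually prove this corollary. It simply records it as ``a consequence'' of Theorem~\ref{strichartz} and refers the reader to \cite{Cazenav} and \cite{linares}. Your $TT^\ast$ argument is precisely the standard proof one finds in those references, so in substance you have supplied what the paper only cited.
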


We have under the assumption (\ref{dampingh}) the following local well-posedness result in $L^{2}$-level for the nonlinear Schr\"{o}dinger $(\ref{sys})$:
\begin{theorem}\label{Th1}
Given $\varphi = (\varphi_1,...,\varphi_N) \in L^2(\mathcal{R})$ and $\alpha \in (1,5]$, then there exist $T=T(\varphi, \|a\|_{L^{\infty}}, \alpha, \lambda)>0$ and a unique solution $u=(u_1,...,u_N)$ of system (\ref{sys}) such that:
\begin{equation}
u_i\in \mathcal{C}([0,T];L^2(\mathcal{R}_i))\cap L^r\big((0,T);L^{\alpha+1}(\mathcal{R}_i)\big), 1\leq i \leq N,
\end{equation}
where $r=\frac{4(\alpha+1)}{\alpha-1}.$
\end{theorem}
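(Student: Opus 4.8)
The plan is to set up a standard fixed-point argument for the integral (Duhamel) formulation of the system, using the Strichartz estimates from Theorem \ref{strichartz} and Corollary \ref{str} on each edge separately, and then checking that the coupling conditions at the vertex are preserved by the iteration. First I would rewrite \eqref{sys} in mild form on each branch: for $1\le i\le N$,
$$
u_i(t) = e^{it\partial_x^2}\varphi_i + i\lambda \int_0^t e^{i(t-s)\partial_x^2}\big(u_i(s)|u_i(s)|^{\alpha-1}\big)\,ds - \delta_{i1}\int_0^t e^{i(t-s)\partial_x^2}\big(a(x)u_1(s)\big)\,ds,
$$
where $e^{it\partial_x^2}$ is understood as the Schrödinger propagator on the half-line adapted to the Kirchhoff-type vertex conditions (continuity at $0$ and the balance $\sum_i\partial_x u_i(t,0)=0$); since these conditions define a self-adjoint realization of $\partial_x^2$ on $\mathcal{H}$, the corresponding unitary group obeys the same dispersive and Strichartz bounds as on the whole line, so Theorem \ref{strichartz} and Corollary \ref{str} apply componentwise.

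Next I would define the space
$$
X_T = \Big\{ u=(u_i) : u_i \in \mathcal{C}([0,T];L^2(\mathcal{R}_i))\cap L^r((0,T);L^{\alpha+1}(\mathcal{R}_i)) \Big\}
$$
with the natural norm (the sum over $i$ of the $\mathcal{C}_tL^2_x$ norm plus the $L^r_tL^{\alpha+1}_x$ norm), and let $\Phi(u)$ be the right-hand side of the Duhamel formula above. The admissible pair here is $(p,q)=(\alpha+1,r)$ with $r=4(\alpha+1)/(\alpha-1)$, which satisfies \eqref{couple}; its dual exponents control the nonlinearity because $\||u|^{\alpha-1}u\|_{L^{p'}} = \|u\|_{L^p}^{\alpha}$, and a Hölder estimate in time turns $\|u\|_{L^p}^{\alpha}$ into $T^{\theta}\|u\|_{L^r_tL^p_x}^{\alpha}$ for a positive power $\theta=1-\alpha/r$ (which is positive precisely because $\alpha\le 5$, with equality of exponents at $\alpha=5$ requiring the $L^2$-smallness condition — but here $T$ is allowed to depend on $\varphi$, so subcriticality of $T^\theta$ suffices). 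The linear damping term is harmless: $\|a u_1\|_{L^2}\le \|a\|_{L^\infty}\|u_1\|_{L^2}$, giving an $L^1_tL^2_x$ source handled by the $q=\infty$, $p=2$ endpoint of Strichartz. Combining these, $\Phi$ maps a ball of $X_T$ into itself and is a contraction there for $T$ small depending only on $\|\varphi\|_{L^2}$, $\|a\|_{L^\infty}$, $\alpha$, $\lambda$; the Banach fixed-point theorem then yields existence and uniqueness in that ball, and a standard extension/uniqueness-in-the-whole-space argument (using that two solutions of the integral equation coincide on a small interval, then bootstrapping) removes the restriction to the ball.

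The main obstacle — or rather the point requiring genuine care — is verifying that the half-line propagator attached to the vertex conditions genuinely enjoys the Strichartz estimates as stated, and that each $u_i(t)=\Phi(u)_i(t)$ automatically satisfies the continuity and Kirchhoff conditions at $x=0$ for every $t$. The first is handled by observing that the self-adjoint operator $A=(\partial_x^2)$ with domain encoding $u_i(0)=u_j(0)$ and $\sum_i\partial_x u_i(0)=0$ generates a unitary group on $\mathcal{H}$ that is unitarily equivalent (via reflection/extension) to a direct sum of whole-line and half-line Dirichlet/Neumann propagators, each satisfying Theorem \ref{strichartz}; alternatively one cites \cite{amm1bis} where the dispersive estimate on this network is established. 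The second follows because the vertex conditions are encoded in the domain of $A$ and are preserved under $e^{itA}$ and under the Duhamel integral, provided the source terms $|u_i|^{\alpha-1}u_i$ and $au_1$ lie in the form domain — which holds in the limiting $L^2$ sense used throughout. Once these two points are in place the contraction estimate is routine, and the time of existence depends only on the quantities listed in the statement.
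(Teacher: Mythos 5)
Your overall strategy --- Duhamel formulation on each edge, Strichartz estimates for the network propagator, and a contraction in $\mathcal{C}([0,T];L^2)\cap L^r((0,T);L^{\alpha+1})$ with the admissible pair $(\alpha+1,r)$ --- is exactly the paper's route for the subcritical range $\alpha\in(1,5)$, and your handling of the damping term (as an $L^1_tL^2_x$ source) and of the vertex conditions is acceptable. The genuine gap is the critical case $\alpha=5$, which the statement covers. The Hölder-in-time gain in the nonlinear estimate is $\theta=\frac{1}{r'}-\frac{\alpha}{r}=1-\frac{\alpha-1}{4}$ (not $1-\alpha/r$ as you wrote), and this exponent \emph{vanishes} at $\alpha=5$: the self-map estimate degenerates to something of the form $\|\varphi\|_{L^2}+CTb+Cb^{5}$ with no positive power of $T$ multiplying the nonlinear contribution, and the contraction constant is of size $b^{4}\sim\|\varphi\|_{L^2}^{4}$, which is not small for general data. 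Your parenthetical claim that ``subcriticality of $T^{\theta}$ suffices because $T$ is allowed to depend on $\varphi$'' does not repair this: when $\theta=0$, shrinking $T$ produces no smallness through $T^{\theta}$ at all, so the dependence of $T$ on $\varphi$ cannot enter by that mechanism, and your argument closes only for small data --- which is not assumed in Theorem \ref{Th1}.

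What is missing is the standard critical-case device that the paper uses in Case II of its proof (following Cazenave and Linares--Ponce): replace the norm by $\sup_{t\in[0,T]}\|v(t)-e^{it\partial_x^2}\varphi\|_{L^2}+\big(\int_0^T\|v(t)\|_{L^6}^6\,dt\big)^{1/6}$ and exploit that, for a \emph{fixed} $\varphi\in L^2(\mathcal{R})$, the free evolution satisfies $\big(\int_0^T\|e^{it\partial_x^2}\varphi\|_{L^6}^6\,dt\big)^{1/6}<\varepsilon$ once $T$ is small enough (estimate \eqref{c1} in the paper). It is precisely this step that makes the existence time depend on $\varphi$ itself rather than on $\|\varphi\|_{L^2}$ --- the distinction recorded in the remark following the theorem --- and it is what allows the self-map condition \eqref{B} and the contraction condition \eqref{C} to be satisfied at the critical exponent. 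Without this modification (or an equivalent one), your fixed-point scheme does not close at $\alpha=5$, so the proof as proposed only establishes the theorem for $\alpha\in(1,5)$.
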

\begin{proof}
We divide the proof in two cases.

\medskip

\item [I)\;\textbf{Subcritical case, $\alpha\in(1,5)$: }] Consider $T$ and $b$ positive constants, we need to construct the complete metric space,
\begin{equation*}
X_{T,b}=\left\lbrace
v\in C\left( \left[0,T \right];L^{2}(\mathcal{R}) \right) \cap  L^r\big((0,T);L^{\alpha+1}(\mathcal{R})\big),\vert \vert\vert v \vert\vert\vert\leq b
\right\rbrace,
\end{equation*}
where $\vert \vert\vert . \vert\vert\vert$ indicates the natural norm of the space $$ C\left( \left[0,T \right];L^{2}(\mathcal{R}) \right) \cap \, L^r\left( (0,T);L^{\alpha+1}(\mathcal{R})\right)$$ given by:
$$\vert \vert\vert v \vert\vert\vert_{T}=\sup_{t\in[0,T]} \Vert v(t)\Vert_{L^{2}} + \left( \int_{0}^{T}\|v(t)\|_{L^{\alpha+1}}^{r}dt\right)^{{1}/{r}},
$$
for $r=\frac{4(\alpha+1)}{\alpha-1}$.

\medskip

\item Step 1. Define, for any $u\in X_{T,b}$
\begin{equation}\label{A1}
\left\{
\begin{array}{lll}
\begin{split}
 \Phi_{\varphi_{1}}(u_{1})(t)=\Phi(u_{1})(t)=e^{it\partial_{x}^{2}}\varphi_{1}+i\int_{0}^{T} e^{i\partial_{x}^{2}(t-s)}\left( \lambda \vert u_{1}\vert^{\alpha-1}u_{1}+i\,a(.)u_{1}\right) (s)ds, \\
\Phi_{\varphi_{i}}(u_{i})(t)=\Phi(u_{i})(t)=e^{it\partial_{x}^{2}}\varphi_{i}+i\int_{0}^{T} e^{i\partial_{x}^{2}(t-s)}\left( \lambda \vert u_{i}\vert^{\alpha-1}u_{i}\right) (s)ds\;, \; 2\leq i \leq N. \\
\end{split}
\end{array}\right.
\end{equation}
By using Theorem $\ref{strichartz}$, we deduce from the definition $\Phi(.)$ in equation $\ref{A1}$ and Corollary $\ref{str}$ that
\begin{equation}\label{A3}
\left\{\begin{array}{lll}
 \left\Vert\Phi(u_{1})(t)\right\Vert_{L^{r}\left(L^{\alpha+1}\right)}
\leq \\
c\left\Vert\varphi_{1}\right\Vert_{L^{2}}+c \Vert a \Vert_{L^{\infty}}T \underset{t\in[0,T]}{\sup} \left\Vert u_{1}(t)\right\Vert_{L^{2}} + c\left\vert\lambda\right\vert \left(\int_{0}^{T} \left\Vert u_{1}(t)\right\Vert_{L^{\alpha+1}}^{\alpha r'} dt\right)^{{1}/{r'}},\\
\left\Vert\Phi(u_{i})(t)\right\Vert_{L^{r}\left(L^{\alpha+1}\right)}
\leq c\left\Vert\varphi_{i}\right\Vert_{L^{2}} + c\left\vert\lambda\right\vert \left(\int_{0}^{T} \left\Vert u_{i}(t)\right\Vert_{L^{\alpha+1}}^{\alpha r'} dt\right)^{{1}/{r'}}\;, \; 2\leq i \leq N, \\
\end{array}\right.
\end{equation}
where $r'=\frac{r}{r-1}$.
H\"{o}lder's inequality gives
\begin{equation}\label{A4}
\left\{\begin{array}{lll}
 \left\Vert\Phi(u_{1})(t)\right\Vert_{L^{r}\left(L^{\alpha+1}\right)}
\\
\leq C_{1}\left( \left\Vert\varphi_{1}\right\Vert_{L^{2}}+ T \underset{t\in[0,T]}{\sup} \left\Vert u_{1}(t)\right\Vert_{L^{2}} + T^{\theta} \left(\int_{0}^{T} \left\Vert u_{1}(t)\right\Vert_{L^{\alpha+1}}^{r} dt\right)^{{\alpha}/{r}}\right), \\
\left\Vert\Phi(u_{i})(t)\right\Vert_{L^{r}\left(L^{\alpha+1}\right)}
\leq C_{2}\left( \left\Vert\varphi_{i}\right\Vert_{L^{2}}+ T^{\theta} \left(\int_{0}^{T} \left\Vert u_{i}(t)\right\Vert_{L^{\alpha+1}}^{r} dt\right)^{{\alpha}/{r}}\right)\;, \; 2\leq i \leq N, \\
\end{array}\right.
\end{equation}
where $C_{1}=max\left\lbrace c, c \Vert a \Vert_{L^{\infty}}, c\left\vert\lambda\right\vert  \right\rbrace $, $C_{2}=max\left\lbrace c, c\left\vert\lambda\right\vert  \right\rbrace $ and $\theta=1-\frac{\alpha-1}{4}$.

\medskip

So, if $u_{i}\in X_{T,a}$, where $1\leq i \leq N$,  we have
\begin{equation}\label{A5}
\left\{\begin{array}{lll} \left\Vert\Phi(u_{1})(t)\right\Vert_{L^{r}\left(L^{\alpha+1}\right)}\leq C_{1} \left\Vert\varphi_{1}\right\Vert_{L^{2}}+ C_{1}\,T\,b +C_{1}\, T^{\theta}\,b^{\alpha}, \\
\left\Vert\Phi(u_{i})(t)\right\Vert_{L^{r}\left(L^{\alpha+1}\right)}\leq C_{2} \left\Vert\varphi_{i}\right\Vert_{L^{2}}+C_{2}\, T^{\theta}\,b^{\alpha}\;, \; 2\leq i \leq N. \\
\end{array}\right.
\end{equation}
Then, we have
\begin{equation}\label{A6}
\left\Vert\Phi(u_{i})(t)\right\Vert_{L^{r}\left(L^{\alpha+1}\right)}\leq C \left\Vert\varphi_{i}\right\Vert_{L^{2}}+C\, T\, b+C\, T^{\theta}\,b^{\alpha}\;, \; 1\leq i \leq N,
\end{equation}
where $C=max\lbrace C_{1}+C_{2}, C_{1}\rbrace$.

\medskip

Similarly, we have
\begin{equation}\label{A7}
\sup_{t\in[0,T]} \Vert \Phi(u_{i})(t)\Vert_{L^{2}}\leq C \Vert \varphi_{i}\Vert_{L^{2}}+C\, T\, b+C\, T^{\theta}\,b^{\alpha}\;, \; 1\leq i \leq N,
\end{equation}
where the positive constant $C$ depends on $\alpha$, $\lambda$ and the $L^{\infty}$ norm of the function $a$.
Then, from equation $(\ref{A6})$ and $(\ref{A7})$, we have
\begin{equation}\label{A8}
\vert \vert\vert \Phi(u_{i})(t) \vert\vert\vert_{T}\leq C \left\Vert\varphi_{i}\right\Vert_{L^{2}}+C\, T\, b+C\, T^{\theta}\,b^{\alpha}\;, \; 1\leq i \leq N.
\end{equation}
We fixe $b=2C\left\Vert\varphi_{i}\right\Vert_{L^{2}}\;, \; 1\leq i \leq N. $
Inequality $(\ref{A8})$ enable us to deduce
\begin{equation}
\vert \vert\vert \Phi(u)(t) \vert\vert\vert_{T}=\sum_{i=1}^{N} \vert \vert\vert \Phi(u_{i})(t) \vert\vert\vert_{T}
  \leq C \left\Vert\varphi\right\Vert_{L^{2}}\left( 1+ 2C\, T+2^{\alpha}C^{\alpha}\,T^{\theta}\left\Vert\varphi\right\Vert_{L^{2}}^{\alpha-1}\right).
\end{equation}
So, by choosing $T=\min(K_{1},K_{2})$, such that\begin{equation*}
K_{1}<\frac{1}{4C}\qquad \text{and}\qquad K_{2}<\frac{1}{\left(2^{\alpha+1}C^{\alpha}\Vert\varphi\Vert_{L^{2}}^{\alpha-1} \right)^{{1}/{\theta}}},
\end{equation*}
we get
\begin{equation}\label{A9}
2C\, T+2^{\alpha}C^{\alpha}\,T^{\theta}\left\Vert\varphi\right\Vert_{L^{2}}^{\alpha-1}<1.
\end{equation}
It follows that $\Phi$ is well-defined on $X_{T,b}$, that is $\Phi:X_{T,b}\longrightarrow X_{T,b}$.

\medskip

\item Step 2. Now, if $u,v\in X_{T,b}$, we have
\begin{equation*}
\left\{\begin{array}{lll}
 \left( \Phi(u_{1})-\Phi(v_{1})\right) (t)= \\
i  \int_{0}^{T} e^{i\partial_{x}^{2}(t-s)}\left[\lambda \left(\vert u_{1}\vert^{\alpha-1}u_{1}-\vert v_{1}\vert^{\alpha-1}v_{1}\right)+ia(.)(u_{1}-v_{1}) \right] (s)ds, \\
\left( \Phi(u_{i})-\Phi(v_{i})\right) (t)=i \lambda  \int_{0}^{T} e^{i\partial_{x}^{2}(t-s)}\left(\vert u_{i}\vert^{\alpha-1}u_{i}-\vert v_{i}\vert^{\alpha-1}v_{i}\right)(s)ds\;, \; 2\leq i \leq N. \\
\end{array}\right.
\end{equation*}
The same argument as in $(\ref{A3})$, show that
\begin{equation*}
\left\{\begin{array}{lll}
 \Vert \Phi(u_{1})-\Phi(v_{1})) (t)\Vert_{L^{r}(L^{\alpha+1})}
\leq
\\
c\int_{0}^{T} \left\Vert a(.)\left( u_{1}-v_{1}\right) (t)
\right \Vert_{L^{2}}dt
                  +c\left\vert\lambda\right\vert \left( \int_{0}^{T} \left\Vert \left\vert u_{1} \right\vert^{\alpha-1}u_{1}-\vert v_{1}\vert^{\alpha-1}v_{1} \right\Vert_{L^{\frac{\alpha+1}{\alpha}}}^{r'} dt\right)^{{1}/{r'}},  \\
\Vert( \Phi(u_{i})-\Phi(v_{i})) (t)\Vert_{L^{r}(L^{\alpha+1})} \leq
 \\
c\left\vert\lambda\right\vert \left( \int_{0}^{T} \left\Vert \left\vert u_{i} \right\vert^{\alpha-1}u_{i}-\vert v_{i}\vert^{\alpha-1}v_{i}\right \Vert_{L^{\frac{\alpha+1}{\alpha}}}^{r'} dt\right)^{{1}/{r'}}\;, \; 2\leq i \leq N. \\

\end{array}\right.
\end{equation*}
Since $
\left\vert \left\vert u_{i} \right\vert^{\alpha-1}u_{i}-\vert v_{i}\vert^{\alpha-1}v_{i}\right\vert\leq c \vert u_{i}-v_{i}\vert \left( \left\vert u_{i} \right\vert^{\alpha-1}+\vert v_{i}\vert^{\alpha-1}\right) \;, \; 1\leq i \leq N,$
then by H\"{o}lder's inequality, we obtain that
\begin{equation*}
\left\{\begin{array}{lll}
  \Vert(\Phi(u_{1})-\Phi(v_{1})) (t) \Vert_{L^{r}(L^{\alpha+1})}\leq c \Vert a\Vert_{L^{\infty}} T\underset{t\in[0,T]}{\sup}
\left\Vert u_{1}-v_{1}\right \Vert_{L^{2}} \\
+c\left\vert\lambda\right\vert T^{\theta} \left\lbrace
 \left( \int_{0}^{T}\Vert u_{1} \Vert_{L^{\alpha+1}}^{r}dt\right)^{{\alpha-1}/{r}}+\left( \int_{0}^{T}\Vert v_{1} \Vert_{L^{\alpha+1}}^{r}dt\right)^{{\alpha-1}/{r}}
 \right\rbrace  \times \\
\left(  \int_{0}^{T} \Vert u_{1}(t)-v_{1}(t)\Vert_{L^{\alpha+1}}^{r}dt\right)^{{1}/{r}},\\

\Vert( \Phi(u_{i})-\Phi(v_{i})) (t)\Vert_{L^{r}(L^{\alpha+1})}\leq
\\
 c\left\vert\lambda\right\vert T^{\theta} \left\lbrace
 \left( \int_{0}^{T}\Vert u_{i} \Vert_{L^{\alpha+1}}^{r}dt\right)^{{\alpha-1}/{r}}+\left( \int_{0}^{T}\Vert v_{i} \Vert_{L^{\alpha+1}}^{r}dt\right)^{{\alpha-1}/{r}}
 \right\rbrace \\
  \times \left(  \int_{0}^{T} \Vert u_{i}(t)-v_{i}(t)\Vert_{L^{\alpha+1}}^{r}dt\right)^{{1}/{r}}\;, \; 2\leq i \leq N.
\end{array}\right.
\end{equation*}
So, we have
\begin{equation}\label{A10}
\begin{array}{ll}
\Vert( \Phi(u)-\Phi(v)) (t)\Vert_{L^{r}(L^{\alpha+1})}=\sum_{i=1}^{N}\left \Vert\left( \Phi(u_{i})-\Phi(v_{i})\right) (t)\right \Vert_{L^{r}(L^{\alpha+1})}\\
\leq  C\,T\underset{t\in[0,T]}{\sup}
\left\Vert u_{1}-v_{1}\right \Vert_{L^{2}}
+2C\,T^{\theta} b^{\alpha-1} \left(  \int_{0}^{T} \Vert u(t)-v(t)\Vert_{L^{\alpha+1}}^{r}dt\right)^{{1}/{r}},
\end{array}
\end{equation}
where $C=\max\left\lbrace c \Vert a\Vert_{L^{\infty}}, c\left\vert\lambda\right\vert\right\rbrace $.
Similarly, we have
\begin{equation}
\label{A11}
\sup_{t\in[0,T]}\Vert( \Phi(u)-\Phi(v)) (t)\Vert_{L^{2}}
\leq
\end{equation}
$$
C\,T\sup_{t\in[0,T]}
\left\Vert u_{1}-v_{1}\right \Vert_{L^{2}}
+2C\,T^{\theta} b^{\alpha-1} \left(  \int_{0}^{T} \Vert u(t)-v(t)\Vert_{L^{\alpha+1}}^{r}dt\right)^{{1}/{r}}.
$$
Combining $(\ref{A10})$ and $(\ref{A11})$, we obtain
\begin{equation}
\vert \vert\vert ( \Phi(u)-\Phi(v)) (t) \vert\vert\vert_{T}\leq \left( C\,T+2\,C\,T^{\theta}\, b^{\alpha-1}\right)\vert \vert \vert u-v \vert\vert\vert_{T}.
\end{equation}
It follows from the choice of $b$, $b\leq 2\,C\Vert\varphi\Vert_{L^{2}}$ and inequality $(\ref{A9})$\begin{equation}
 C\,T+2\,C\,T^{\theta}\, b^{\alpha-1}\leq   2\,C\,T+2^{\alpha}\,C^{\alpha}\,T^{\theta}\, \Vert\varphi\Vert_{L^{2}}^{\alpha-1}<1.
\end{equation}

So, $\Phi$ is a contraction from $X_{T,b}$ into itself, then we have proved the existence and uniqueness of the solution of the problem
\begin{equation}\label{A12}
\left\{\begin{array}{lll}
 u_{1}(t)&=e^{it\partial_{x}^{2}}\varphi_{1}+i\int_{0}^{T} e^{i\partial_{x}^{2}(t-s)}\left( \lambda \vert u_{1}\vert^{\alpha-1}u_{1}+i\,a(.)u_{1}\right) (s)ds, \\
u_{i}(t)&=e^{it\partial_{x}^{2}}\varphi_{i}+i\int_{0}^{T} e^{i\partial_{x}^{2}(t-s)}\left( \lambda \vert u_{i}\vert^{\alpha-1}u_{i}\right) (s)ds\;, \; 2\leq i \leq N. \\
\end{array}\right.
\end{equation}
\item Step 3. Note that if $u, v$ are the corresponding solutions of (\ref{A12}) with initial data $\varphi, \psi$, respectively, then
\begin{equation*}
\left\{\begin{array}{lll}
 u_{1}-v_{1}= \\
e^{it\partial_{x}^{2}}\left(\varphi_{1}-\psi_{1} \right) +i\int_{0}^{T} e^{i\partial_{x}^{2}(t-s)}\left[\lambda \left(\vert u_{1}\vert^{\alpha-1}u_{1}-\vert v_{1}\vert^{\alpha-1}v_{1}\right)+ia(.)(u_{1}-v_{1}) \right] (s)ds, \\
u_{i}-v_{i}=e^{it\partial_{x}^{2}}\left(\varphi_{i}-\psi_{i} \right)+i \lambda  \int_{0}^{T} e^{i\partial_{x}^{2}(t-s)}\left(\vert u_{i}\vert^{\alpha-1}u_{i}-\vert v_{i}\vert^{\alpha-1}v_{i}\right)(s)ds\;, \; 2\leq i \leq N. \\
\end{array}\right.
\end{equation*}
Similarly, following the same arguments used earlier, we get
\begin{equation} \label{A13}
\left\{\begin{array}{lll}

  \Vert(u_{1}-v_{1}) (t) \Vert_{L^{r}(L^{\alpha+1})}\leq C\Vert \varphi_{1}-\psi_{1}\Vert_{L^{2}} + C\,T \underset{t\in[0,T]}{\sup}
\left\Vert u_{1}-v_{1}\right \Vert_{L^{2}} \\+C^{\alpha}2^{\alpha-1} T^{\theta}
\left\lbrace  \left\Vert\varphi_{1} \right\Vert_{L^{2}}^{\alpha-1}+  \left\Vert\psi_{1} \right\Vert_{L^{2}}^{\alpha-1}\right\rbrace
 \times \left(  \int_{0}^{T} \Vert u_{1}(t)-v_{1}(t)\Vert_{L^{\alpha+1}}^{r}dt\right)^{{1}/{r}},\\

 \Vert( u_{i}-v_{i}(t)\Vert_{L^{r}(L^{\alpha+1})}\leq  C\Vert \varphi_{i}-\psi_{i}\Vert_{L^{2}} +C^{\alpha}2^{\alpha-1} T^{\theta}\left\lbrace  \left\Vert\varphi_{i} \right\Vert_{L^{2}}^{\alpha-1}+  \left\Vert\psi_{i} \right\Vert_{L^{2}}^{\alpha-1}\right\rbrace \\
  \times \left(  \int_{0}^{T} \Vert u_{i}(t)-v_{i}(t)\Vert_{L^{\alpha+1}}^{r}dt\right)^{{1}/{r}}\;, \; 2\leq i \leq N, \\
\end{array}\right.
\end{equation}
where $C=\max\left\lbrace c, c \Vert a\Vert_{L^{\infty}}, c\left\vert\lambda\right\vert\right\rbrace.$
Analogously, we have
\begin{equation} \label{A14}
\left\{\begin{array}{lll}
 \underset{t\in[0,T]}{\sup} \Vert(u_{1}-v_{1}) (t) \Vert_{L^{2}}\leq C\Vert \varphi_{1}-\psi_{1}\Vert_{L^{2}} + C\,T \underset{t\in[0,T]}{\sup}
\left\Vert u_{1}-v_{1}\right \Vert_{L^{2}} \\+C^{\alpha}2^{\alpha-1} T^{\theta}
\left\lbrace  \left\Vert\varphi_{1} \right\Vert_{L^{2}}^{\alpha-1}+  \left\Vert\psi_{1} \right\Vert_{L^{2}}^{\alpha-1}\right\rbrace
 \times \left(  \int_{0}^{T} \Vert u_{1}(t)-v_{1}(t)\Vert_{L^{\alpha+1}}^{r}dt\right)^{{1}/{r}},\\

 \underset{t\in[0,T]}{\sup}\Vert( u_{i}-v_{i}(t)\Vert_{L^{2}}\leq  C\Vert \varphi_{i}-\psi_{i}\Vert_{L^{2}}
+C^{\alpha}2^{\alpha-1} T^{\theta}\left\lbrace  \left\Vert\varphi_{i} \right\Vert_{L^{2}}^{\alpha-1}+  \left\Vert\psi_{i} \right\Vert_{L^{2}}^{\alpha-1}\right\rbrace \\
  \times \left(  \int_{0}^{T} \Vert u_{i}(t)-v_{i}(t)\Vert_{L^{\alpha+1}}^{r}dt\right)^{{1}/{r}}\;, \; 2\leq i \leq N,\\
\end{array}\right.
\end{equation}
where $C=\max\left\lbrace c, c \Vert a\Vert_{L^{\infty}}, c\left\vert\lambda\right\vert\right\rbrace.$ Combining $(\ref{A13})$ and $(\ref{A14})$
\begin{equation}
\vert \vert\vert u-v \vert\vert\vert_{T}=\sum_{i=1}^{N} \vert \vert\vert u_{i}-v_{i} \vert\vert\vert_{T}
  \leq
	\end{equation}
$$
C \left\Vert \varphi - \psi\right\Vert_{L^{2}}+\left( C\, T+2^{\alpha-1}C^{\alpha}\,T^{\theta}\left\lbrace \left\Vert\psi\right\Vert_{L^{2}}^{\alpha-1}+\left\Vert\varphi\right\Vert_{L^{2}}^{\alpha-1} \right\rbrace\right)\vert \vert\vert v-u \vert\vert\vert_{T}.
$$
If $\left\Vert \varphi - \psi\right\Vert_{L^{2}}$ is small enough (see $  (\ref{A9})$), then\begin{equation*}
\vert \vert\vert u-v \vert\vert\vert_{T}\leq \tilde{k}\left\Vert \varphi - \psi\right\Vert_{L^{2}},
\end{equation*}
Consequently, we have proved the continuous dependence of $\Phi(u(t))=\Phi_{\varphi}(u(t))$ with respect to $\varphi$. This completes the proof of case I).
\medskip
\item[II) \textbf{Critical case,} $\alpha=5.$]
In this case, we need some modifications in the proof of case I) given above.\\
According to \cite{Cazenav,linares}, we have for $(p,q)$ be a pair satisfying condition $(\ref{couple})$ in Theorem $\ref{strichartz}$:
Given $\varphi \in L^{2}({\mathcal{R}})$ and $\varepsilon>0$, there is $\delta$ and $T>0$ such that if \begin{equation}
\Vert\varphi-\psi\Vert_{L^{2}}<\delta,
\end{equation}
then,\begin{equation}\label{c1}
\left(\int_{0}^{T}\left\Vert e^{it\partial_{x}^{2}} \psi\right\Vert_{L^{p}}^{q} dt\right)^{{1}/{q}} <\varepsilon.
\end{equation}
Let us define the complete metric space
\begin{equation*}
\tilde{X}_{T,b}=\left\lbrace
v\in C\left( \left[0,T \right];L^{2}(\mathcal{R} \right) \cap  L^6((0,T);L^{6}(\mathcal{R})),\vert \vert\vert v \vert\vert\vert\leq b
\right\rbrace,
\end{equation*}
where $\vert \vert\vert . \vert\vert\vert$ given by
$$\vert \vert\vert v \vert\vert\vert_{T}=\sup_{t\in[0,T]} \Vert v(t)-e^{it\partial_{x}^{2}}\Vert_{L^{2}} + \left( \int_{0}^{T}\|v(t)\|_{L^{6}}^{6}dt\right)^{{1}/{6}}.
$$
\item Step 1. Applying Theorem $\ref{strichartz}$ and Corollary $\ref{str}$ to system $(\ref{A1})$, it follows that
\begin{equation}\label{c2}
\left\{\begin{array}{lll}
  \underset{t\in[0,T]}{\sup}\big\Vert\Phi(u_{1})(t)-e^{it\partial_{x}^{2}}\varphi_{1}\big\Vert_{L^{2}}\leq  c \Vert a \Vert_{L^{\infty}}T\underset{t\in[0,T]}{\sup}\Vert u_{1}-e^{it\partial_{x}^{2}}\varphi_{1}(t)\Vert_{L^{2}}\\+c \Vert a \Vert_{L^{\infty}}T\underset{t\in[0,T]}{\sup}\Vert e^{it\partial_{x}^{2}}\varphi_{1}(t)\Vert_{L^{2}}+c\left\vert \lambda \right\vert \left(\int_{0}^{T}\Vert u_{1}(t)\Vert_{L^{6}}^{6}dt \right)^{{5}/{6}}, \\
 \underset{t\in[0,T]}{\sup}\Vert\Phi(u_{i})(t)-e^{it\partial_{x}^{2}}\varphi_{i}\Vert_{L^{2}}\leq  c\left\vert \lambda \right\vert \left(\int_{0}^{T}\Vert u_{i}(t)\Vert_{L^{6}}^{6}dt \right)^{{5}/{6}} \;, \; 2\leq i \leq N. \\
\end{array}\right.
\end{equation}
Therefore, we have
\begin{equation} \label{c4}
\sup_{t\in[0,T]}\left\Vert\Phi(u_{i})(t)-e^{it\partial_{x}^{2}}\varphi_{i}\right\Vert_{L^{2}}\leq C\,T\,b+C\,T\varepsilon+C\,b^{5}  \;, \; 1\leq i \leq N,
\end{equation}
where $C=\max\left\lbrace  c \Vert a \Vert_{L^{\infty}},c\left\vert \lambda \right\vert \right\rbrace .$ Similarly, by applying Theorem $\ref{strichartz}$, Corollary $\ref{str}$ and the estimate $(\ref{c1})$ to the system $(\ref{A1})$ , we obtain
\begin{equation}\label{c3}
\left\{\begin{array}{lll}
\left( \int_{0}^{T}\left\Vert\Phi(u_{1})(t)\right\Vert_{L^{6}}^{6}dt\right)^{{1}/{6}} \leq c\;\varepsilon+c \Vert a \Vert_{L^{\infty}}T\underset{t\in[0,T]}{\sup}\Vert u_{1}-e^{it\partial_{x}^{2}}\varphi_{1}(t)\Vert_{L^{2}}\\ +c \Vert a \Vert_{L^{\infty}}T\underset{t\in[0,T]}{\sup}\Vert e^{it\partial_{x}^{2}}\varphi_{1}(t)\Vert_{L^{2}}+c\left\vert \lambda \right\vert \left(\int_{0}^{T}\Vert u_{1}(t)\Vert_{L^{6}}^{6}dt \right)^{{5}/{6}},  \\
\left( \int_{0}^{T}\left\Vert\Phi(u_{i})(t)\right\Vert_{L^{6}}^{6}dt\right)^{{1}/{6}} \leq c\;\varepsilon+ c\left\vert \lambda \right\vert \left(\int_{0}^{T}\Vert u_{i}(t)\Vert_{L^{6}}^{6}dt \right)^{{5}/{6}}  \;, \; 2\leq i \leq N. \\
\end{array}\right.
\end{equation}
Then, we have
\begin{equation}\label{c5}
\left( \int_{0}^{T}\left\Vert\Phi(u_{i})(t)\right\Vert_{L^{6}}^{6}dt\right)^{{1}/{6}}\leq C\,\varepsilon+ C\,T\,b+C\,T\varepsilon+C\,b^{5}  \;, \; 1\leq i \leq N.
\end{equation}
We get from $(\ref{c4})$ and $(\ref{c5})$ that
\begin{equation}
\vert \vert\vert \Phi(u)(t) \vert\vert\vert_{T}\leq C\,\varepsilon+ C\,T\,b+C\,T\varepsilon+C\,b^{5}.
\end{equation}
Therefore, if
\begin{equation}\label{B}
C\,\varepsilon+ C\,T\,b+C\,T\varepsilon+C\,b^{5}< b,
\end{equation}
we get that $\Phi(\tilde{X}_{T,b})\subseteq \tilde{X}_{T,b}$.
\item Step 2. The argument used in the proof of Theorem \ref{Th1} yields
\begin{equation*}
\left\{\begin{array}{lll}
  \underset{t\in[0,T]}{\sup}\Vert(\Phi(u_{1})-\Phi(v_{1})) (t) \Vert_{L^{2}}\leq
	c \Vert a\Vert_{L^{\infty}} T\underset{t\in[0,T]}{\sup}
\left\Vert (u_{1}-e^{it\partial_{x}^{2}})-(v_{1}-e^{it\partial_{x}^{2}})\right \Vert_{L^{2}} \\
+2c\left\vert\lambda\right\vert b^{4} \left(  \int_{0}^{T} \Vert u_{1}(t)-v_{1}(t)\Vert_{L^{6}}^{6}dt\right)^{{1}/{6}},\\

 \underset{t\in[0,T]}{\sup}\Vert( \Phi(u_{i})-\Phi(v_{i})) (t)\Vert_{L^{2}}\leq
 2c\left\vert\lambda\right\vert b^{4}
  \left(  \int_{0}^{T} \Vert u_{i}(t)-v_{i}(t)\Vert_{L^{6}}^{6}dt\right)^{{1}/{6}}\;, \; 2\leq i \leq N. \\
\end{array}\right.
\end{equation*}
Similarly, we have
\begin{equation*}
\left\{\begin{array}{lll}
 \bigg(\int_{0}^{T}   \Vert( \Phi(u_{1})-\Phi(u_{1}))(t) \Vert_{L^{6}}^{6}dt\bigg)^{{1}/{6}} \leq \\
c \Vert a\Vert_{L^{\infty}} T\underset{t\in[0,T]}{\sup}
\left\Vert (u_{1}-e^{it\partial_{x}^{2}})-(v_{1}-e^{it\partial_{x}^{2}})\right \Vert_{L^{2}} +\\
2c\left\vert\lambda\right\vert b^{4} \left(  \int_{0}^{T} \Vert u_{1}(t)-v_{1}(t)\Vert_{L^{6}}^{6}dt\right)^{{1}/{6}},\\
 \bigg(\int_{0}^{T}   \Vert( \Phi(u_{i})-\Phi(v_{i}))(t) \Vert_{L^{6}}^{6}dt\bigg)^{{1}/{6}} \leq \\
 2c\left\vert\lambda\right\vert b^{4}
  \left(  \int_{0}^{T} \Vert u_{i}(t)-v_{i}(t)\Vert_{L^{6}}^{6}dt\right)^{{1}/{6}}\;, \; 2\leq i \leq N. \\
\end{array}\right.
\end{equation*}
This yields,\begin{equation*}
\vert \vert\vert (\Phi(u)-\Phi(v))(t) \vert\vert\vert_{T}\leq \left(C\,T+2C\,b^{4} \right) \vert \vert\vert u-v \vert\vert\vert_{T},
\end{equation*}
thus, for \begin{equation}\label{C}
C\,T+2C\,b^{4}<1,
\end{equation}
we have that $\Phi(.)$ is a contraction. Now, taking $b=2C\left\Vert \varphi \right\Vert_{L^{2}}$ and $\varepsilon= \left\Vert \varphi \right\Vert_{L^{2}}$, such that
\begin{equation*}
T\leq \dfrac{1-2^{5}C^{5}\varepsilon^{4}}{1+2C},
\end{equation*}
we see that both $(\ref{B})$ and $(\ref{C})$ are verified. This completes the proof, the remainder of the proof follows the same argument employed to show case I).
\end{proof}
We have the following result:
\begin{corollary}\label{coro2}
The solution $u$ of the system $(\ref{sys})$ obtained in Theorem $\ref{Th1}$ belongs to $L^{q}((0,s);L^{p}(\mathcal{R}))$ for all $(p,q)$ admissible pair defined in Theorem $\ref{strichartz}$.
\end{corollary}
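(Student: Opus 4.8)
The plan is to insert the Duhamel representation of the solution produced by Theorem \ref{Th1} back into the Strichartz estimates of Theorem \ref{strichartz} and Corollary \ref{str}, but now reading the left-hand side with an \emph{arbitrary} admissible pair and choosing, on the right-hand side, whichever dual pair places the nonlinear and damping terms in a mixed-norm space already known to be finite. Since $\|f\|_{L^p(\mathcal{R})}$ is comparable to a combination of the $\|f_i\|_{L^p(\mathcal{R}_i)}$, it suffices to bound each component $u_i$ separately.

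Fix an admissible pair $(p,q)$, that is $2\le p\le\infty$ with $\tfrac{2}{q}=\tfrac12-\tfrac1p$, and work on an interval $(0,s)$ with $s\le T$. For $2\le i\le N$ the solution satisfies
$$u_i(t)=e^{it\partial_x^2}\varphi_i+i\lambda\int_0^t e^{i(t-\tau)\partial_x^2}\big(|u_i|^{\alpha-1}u_i\big)(\tau)\,d\tau .$$
I would bound the free term by item 1) of Theorem \ref{strichartz}, namely $\|e^{it\partial_x^2}\varphi_i\|_{L^q((0,s);L^p)}\le c\|\varphi_i\|_{L^2}$, and the Duhamel term by Corollary \ref{str} with $(p,q)$ on the left and the admissible pair $(\alpha+1,r)$, $r=\tfrac{4(\alpha+1)}{\alpha-1}$, on the right; since $\||u_i|^{\alpha-1}u_i\|_{L^{(\alpha+1)/\alpha}}=\|u_i\|_{L^{\alpha+1}}^{\alpha}$, this produces the bound $c|\lambda|\big(\int_0^s\|u_i\|_{L^{\alpha+1}}^{\alpha r'}\,d\tau\big)^{1/r'}$, and the same H\"older-in-time step that generated the factor $T^{\theta}$ in the proof of Theorem \ref{Th1}, with $\theta=1-\tfrac{\alpha-1}{4}$, turns it into $c|\lambda|\,s^{\theta}\big(\int_0^s\|u_i\|_{L^{\alpha+1}}^{r}\,d\tau\big)^{\alpha/r}$, which is finite because Theorem \ref{Th1} already places $u_i$ in $L^r((0,T);L^{\alpha+1})$. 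For $u_1$ there is the additional contribution $-\int_0^t e^{i(t-\tau)\partial_x^2}a(\cdot)u_1\,d\tau$; here I would apply Corollary \ref{str} with the dual pair $(2,1)$ on the right — legitimate since $(2,\infty)$ is admissible — to bound it by $c\|au_1\|_{L^1((0,s);L^2)}\le c\|a\|_{L^\infty}\,s\,\sup_{[0,s]}\|u_1\|_{L^2}<\infty$. Summing over $i$ yields $u\in L^q((0,s);L^p(\mathcal{R}))$.

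In the critical case $\alpha=5$ the argument is identical with $(6,6)$ playing the role of $(\alpha+1,r)$: $|u_i|^{4}u_i$ has spatial $L^{6/5}$-norm equal to $\|u_i\|_{L^6}^{5}$, and since $(6,6)$ is admissible with dual $(6/5,6/5)$, Corollary \ref{str} bounds the Duhamel term by $c|\lambda|\big(\int_0^s\|u_i\|_{L^6}^{6}\,d\tau\big)^{5/6}$, finite because Theorem \ref{Th1} gives $u_i\in L^6((0,T);L^6)$; the linear and damping terms are treated exactly as above. There is no real obstacle in this corollary: the only points needing attention are the bookkeeping of admissible pairs — in particular checking that $(2,\infty)$, $(6,6)$ and $(\alpha+1,r)$ satisfy the gauge condition (\ref{couple}) so that their duals may legitimately sit on the right-hand side of the inhomogeneous estimate — and performing the H\"older-in-time reduction so that the final bound involves only the space-time norms already shown to be finite when the solution was constructed.
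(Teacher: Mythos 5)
Your proposal is correct and is precisely the standard Strichartz bootstrap (Duhamel formula estimated with an arbitrary admissible pair on the left and the already-controlled pairs $(\alpha+1,r)$, respectively $(6,6)$, plus the dual of $(2,\infty)$ for the damping term on the right) that the paper's proof invokes by simply citing Corollary 5.1 of \cite{linares} and Theorem 4.7.1 of \cite{Cazenav}; you have just written that argument out explicitly, including the correct H\"older-in-time exponent $\theta=1-\tfrac{\alpha-1}{4}$. No gap.
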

\begin{proof}
The proof of this results is similar to the one given in \cite[Corollary 5.1]{linares} for the subcritical case and
\cite[Theorem 4.7.1]{Cazenav} for the critical case.
\end{proof}
\begin{remark}
Notice that the time of existence in the subcritical case depends only on $\left\Vert \varphi \right\Vert_{L^{2}}$; meanwhile, in the critical case, the time of existence depends on the $\varphi$ itself, and not only on its norm.
\end{remark}
The following corollaries establish global solution of the system $(\ref{sys})$ in $L^{2}$-norm in subcritical case and critical case respectively.
\begin{corollary}
If the nonlinearity power $\alpha\in(1,5)$, then for any $\varphi\in L^{2}(\mathcal{R})$ the local solution $u$ of the system $(\ref{sys})$ extends globally with
$$ u\in \mathcal{C}([0,+\infty);L^2(\mathcal{R}))\cap L_{loc}^{q}\big([0,+\infty);L^{p}(\mathcal{R})\big),$$
where $(p,q)$ satisfies the condition $(\ref{couple}).$
\end{corollary}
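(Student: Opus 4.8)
The plan is to promote the local solution of Theorem~\ref{Th1} to a global one by the standard continuation argument, the engine of which is a uniform bound on $\|u(t)\|_{L^2(\mathcal{R})}$; this works precisely because, in the subcritical range $\alpha\in(1,5)$, the lifespan $T$ produced by Theorem~\ref{Th1} depends on the data only through $\|\varphi\|_{L^2(\mathcal{R})}$ (and on the fixed quantities $\|a\|_{L^\infty}$, $\alpha$, $\lambda$), as noted in the Remark after Corollary~\ref{coro2}.

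First I would establish that $t\mapsto\|u(t)\|_{L^2(\mathcal{R})}$ is non-increasing, which is the content of the formal energy identity~(\ref{E}): multiplying the $i$-th equation of~(\ref{sys}) by $\overline{u_i}$, integrating over $\mathcal{R}_i$, taking imaginary parts and summing over $i$, the terms generated at the common vertex read $\mathrm{Im}\big(\overline{u_i(t,0)}\,\partial_x u_i(t,0)\big)$ and cancel after summation because of the continuity condition $u_i(t,0)=u_j(t,0)$ together with the Kirchhoff condition $\sum_{i=1}^N\partial_x u_i(t,0)=0$; what survives is $\frac{d}{dt}E_u(t)=-2\int_{\mathbb{R}_+}a(x)|u_1(t,x)|^2\,dx\le 0$ since $a\ge 0$. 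To make this rigorous for merely $L^2$ data I would not differentiate $u$ directly but argue from the Duhamel representation~(\ref{A12}), exploiting the additional integrability $u_i\in L^r((0,T);L^{\alpha+1})$ furnished by Theorem~\ref{Th1} and Corollary~\ref{coro2} — just enough to give $\lambda|u_i|^{\alpha-1}u_i$ a meaning in $L^2$ in the relevant duality pairing — or, alternatively, approximate $\varphi$ in $L^2(\mathcal{R})$ by smooth compactly supported data, derive the identity for the ensuing regular solutions, and pass to the limit using the continuous dependence established in Step~3 of the proof of Theorem~\ref{Th1}. Either way one obtains $\|u(t)\|_{L^2(\mathcal{R})}\le\|\varphi\|_{L^2(\mathcal{R})}$ throughout the interval of existence.

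With this bound in hand the continuation is routine. Writing $T_0=T(\|\varphi\|_{L^2(\mathcal{R})})$ for the lifespan of Theorem~\ref{Th1}, restarting the local construction from $u(T_0)$ yields a solution on a further interval of length $T(\|u(T_0)\|_{L^2(\mathcal{R})})\ge T_0$, the inequality holding because the local time is a non-increasing function of the $L^2$ norm of the datum and $\|u(T_0)\|_{L^2(\mathcal{R})}\le\|\varphi\|_{L^2(\mathcal{R})}$. Iterating, the solution extends in steps of length at least $T_0$, hence to all of $[0,+\infty)$; on every $[0,s]$ it is obtained after finitely many steps, and Corollary~\ref{coro2} then gives $u\in L^q((0,s);L^p(\mathcal{R}))$ for each admissible pair $(p,q)$, so that $u\in\mathcal{C}([0,+\infty);L^2(\mathcal{R}))\cap L^q_{\mathrm{loc}}([0,+\infty);L^p(\mathcal{R}))$, as claimed.

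The only genuinely delicate point is the a priori estimate: the energy identity~(\ref{E}) is only formal at the $L^2$ level, so the integration by parts at the vertex — and in particular the cancellation of the boundary terms, which must be seen to use nothing beyond the two transmission conditions in~(\ref{sys}) — has to be justified either through the Strichartz regularity of the Duhamel solution or through the density argument sketched above, with the limiting procedure carried out carefully. Everything downstream of that estimate is a standard bootstrap.
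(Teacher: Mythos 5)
Your argument is correct and is essentially the paper's own proof: the monotonicity of $\|u(t)\|_{L^2(\mathcal{R})}$ coming from the identity (\ref{E}), together with the fact that in the subcritical range the local existence time of Theorem \ref{Th1} depends only on $\|\varphi\|_{L^2(\mathcal{R})}$, allows the local construction to be iterated indefinitely, with Corollary \ref{coro2} supplying the $L^q L^p$ integrability on each finite interval. Your additional care in justifying the formal identity (\ref{E}) at the $L^2$ level (via the Duhamel representation or density and continuous dependence) is a refinement of a point the paper leaves implicit, but it does not change the route.
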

\begin{proof}
Since $T$ depends only on $\Vert \varphi \Vert_{L^{2}(\mathcal{R})}$ and, by using $(\ref{E})$,  we have $\int_{0}^{+\infty}\vert u \vert^{2}\leq \Vert\varphi\Vert_{L^{2}(\mathcal{R})}^{2}$, we deduce, after an interaction argument, that a similar inequality as in $(\ref{A9})$ remains valid for all $T>0$. This last fact enable us to conclude that $u(t)$ can be extended to all $[0,+\infty).$
\end{proof}
\medskip
The situation for the critical case $\alpha=5$ is quite different. In this case, the local result shows the existence of a solution in a time interval depending on the data $\varphi$ itself and not its norm. So, the fact that $\int_{0}^{+\infty}\vert u \vert^{2}\leq \Vert\varphi\Vert_{L^{2}(\mathcal{R})}^{2}$ does not guarantee the existence of a global solution. An important result of global solutions for this case is established provided that is assumed a smallness condition on the initial data.
According to \cite{Cazenav,linares}, we have:
\begin{corollary}\label{coro3}
Let us assume $\alpha=5$. The additional assumption $\Vert\varphi\Vert_{L^{2}}\ll 1$ implies that the local solution $u$ of the system $(\ref{sys})$ can be extended globally, that is\begin{equation}
u \in \mathcal{C}([0,+\infty);L^2(\mathcal{R}))\cap L^q\big([0,+\infty);L^{p}(\mathcal{R})\big), \forall\; T>0,
\end{equation}
where $(p,q)$ are admissible pairs satisfying condition $(\ref{couple})$.
\end{corollary}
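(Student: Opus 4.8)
\noindent The plan is to combine the local theory of Theorem~\ref{Th1} and Corollary~\ref{coro2} with the global‑in‑time Strichartz estimate of Theorem~\ref{strichartz} and the a priori $L^2$ bound furnished by the energy identity $(\ref{E})$; the only genuinely new point compared with \cite{Cazenav,linares} is to absorb the damping term $i a(x) u_1$, which does not fit directly into the critical Strichartz scheme over a long time interval. Write $\eta:=\|\varphi\|_{L^2(\mathcal{R})}$ and let $[0,T_{\max})$ be the maximal interval of existence of the solution $u$ given by Theorem~\ref{Th1} for $\alpha=5$. First I would note that, since $a\ge 0$, $(\ref{E})$ yields $\|u(t)\|_{L^2(\mathcal{R})}\le \eta$ for every $t\in[0,T_{\max})$; in particular $\|u_1(t)\|_{L^2}\le\eta$ and the whole orbit remains in the small‑data regime $\eta\ll1$ in which the critical fixed‑point argument closes.

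The core of the argument is a uniform short‑time bound on the critical Strichartz norm. Fix once and for all $\tau:=\min\{1,\|a\|_{L^\infty}^{-1}\}>0$ and take the pair $(p,q)=(6,6)$, which satisfies $(\ref{couple})$. For any $t_0\ge 0$ with $[t_0,t_0+\tau]\subset[0,T_{\max})$, I would write the Duhamel formula on $[t_0,t_0+\tau]$ with base point $t_0$ and then apply: Theorem~\ref{strichartz}(1) to the free part, Corollary~\ref{str} to the nonlinear part using $\||u|^{4}u\|_{L^{6/5}_{t,x}}=\|u\|_{L^{6}_{t,x}}^{5}$, and Theorem~\ref{strichartz}(2) with the dual pair of $(2,\infty)$ to the damping part, using $\|a u_1\|_{L^1((t_0,t_0+\tau);L^2)}\le\|a\|_{L^\infty}\,\tau\,\sup_{[t_0,t_0+\tau]}\|u_1\|_{L^2}\le\eta$ by the choice of $\tau$. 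This gives
\begin{equation*}
\|u\|_{L^{6}((t_0,t_0+\tau);L^{6}(\mathcal{R}))}\le 2c\,\eta + c\,|\lambda|\,\|u\|_{L^{6}((t_0,t_0+\tau);L^{6}(\mathcal{R}))}^{5}.
\end{equation*}
Since $t\mapsto\|u\|_{L^{6}((t_0,t);L^{6})}$ is continuous, vanishes at $t_0$, and satisfies the same inequality on every subinterval, a standard continuity/bootstrap argument shows that, as soon as $\eta$ is small enough that $c|\lambda|(4c\eta)^{4}\le\tfrac14$, one has $\|u\|_{L^{6}((t_0,t_0+\tau);L^{6}(\mathcal{R}))}\le 4c\,\eta$, a bound \emph{independent of $t_0$}.

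Finally, suppose $T_{\max}<+\infty$: covering $[0,T_{\max})$ by at most $\lceil T_{\max}/\tau\rceil$ intervals of length $\le\tau$ and summing the previous bound gives $\|u\|_{L^{6}((0,T_{\max});L^{6}(\mathcal{R}))}<\infty$, which contradicts the blow‑up alternative attached to the local theory of Theorem~\ref{Th1} (obtained exactly as in \cite{Cazenav,linares}, the smallness $\|u(t)\|_{L^2}\le\eta\ll1$ ensuring that the free evolution of $u(t)$ has small $L^{6}_tL^{6}_x$ norm on $(t,\infty)$). Hence $T_{\max}=+\infty$ and $u\in\mathcal{C}([0,+\infty);L^{2}(\mathcal{R}))$; the membership $u\in L^{q}((0,T);L^{p}(\mathcal{R}))$ for every admissible $(p,q)$ and every $T>0$ then follows by feeding the just‑obtained $L^{6}_{t,x}$ and $L^\infty_tL^2_x$ bounds back into the Duhamel formula on $(0,T)$ and applying Theorem~\ref{strichartz} and Corollary~\ref{str} once more, as in \cite[Theorem~4.7.1]{Cazenav} and \cite{linares}. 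I expect the main obstacle to be precisely the damping contribution: it cannot be controlled by Strichartz norms on its own, and the device that makes the estimate uniform is to run the scheme on time slices of length at most $\|a\|_{L^\infty}^{-1}$, on which $\|a u_1\|_{L^1_tL^2_x}\le\eta$.
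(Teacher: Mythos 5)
Your argument is correct in substance, but it takes a genuinely different route from the paper, which in fact gives no proof of Corollary \ref{coro3} at all: it simply appeals to \cite{Cazenav,linares}, where small-data global existence for the $L^2$-critical equation is obtained by a single contraction argument run globally in time, the point being that the global Strichartz estimate of Theorem \ref{strichartz} gives $\|e^{it\partial_x^2}\varphi\|_{L^6(\mathbb{R};L^6)}\le c\|\varphi\|_{L^2}\ll 1$, so the fixed point closes on $(0,+\infty)$ at once. You correctly identify why that one-shot argument does not transfer verbatim here: the damping term $ia(x)u_1$ has no smallness in any dual Strichartz norm over an infinite time interval. Your remedy --- use $a\ge 0$ and (\ref{E}) to get $\|u(t)\|_{L^2}\le\eta$ for all $t$, then bootstrap on windows of length $\tau\le\|a\|_{L^\infty}^{-1}$ to obtain the $t_0$-independent bound $\|u\|_{L^6((t_0,t_0+\tau);L^6)}\le 4c\eta$, then continue the solution --- is the right adaptation, and it buys an actual proof in the damped network setting rather than a citation to a theorem proved for the undamped equation; the price is that the bound is only slice-wise, so summing gives Strichartz norms on each bounded interval $(0,T)$ (consistent with the ``$\forall\,T>0$'' reading of the statement) rather than a literal global $L^q([0,+\infty);L^p)$ bound. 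Two points deserve tightening. First, Theorem \ref{Th1} is not stated with a blow-up alternative, so the contradiction at $T_{\max}$ rests on a criterion you only assert; note that your parenthetical remark already suffices on its own: since $\|u(t_0)\|_{L^2}\le\eta\ll1$, the free evolution of $u(t_0)$ is globally small in $L^6_{t,x}$, hence the smallness conditions (\ref{B}) and (\ref{C}) in the critical step of the proof of Theorem \ref{Th1} can be met with $\varepsilon\sim c\eta$ and a step length depending only on $\|a\|_{L^\infty}$ and $\lambda$, so the solution restarts indefinitely with a uniform step and $T_{\max}=+\infty$ follows directly, the slice-wise $L^6$ bound then being needed only for the membership $u\in L^q((0,T);L^p)$. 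Second, when invoking Corollary \ref{str} for the nonlinearity and the pair $(2,\infty)$ for the damping, you are implicitly using the Strichartz machinery for the Kirchhoff Laplacian on the star graph rather than for $\partial_x^2$ on the line; the paper is equally cavalier about this, so it is not a gap relative to the paper, but it is worth a word.
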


\section{Exponential stability}

First, we give the following technical lemma:
\begin{lemma} \label{lem1}
The solution related to the system (\ref{sys}) verifies the following inequality:
\begin{equation}
\label{ener}
\int_0^t E_u (s) \, ds \leq \int_0^t \sum_{i=2}^N E_{u_i} (s) \, ds \, +
\end{equation}
$$
\int_0^t \int_0^R |u_1(s,x)|^2 \, dxds
- \frac{1}{2\alpha_0} \big(E_u (t)- E_u (0)\big), \, \forall\, t\geq 0.
$$
\end{lemma}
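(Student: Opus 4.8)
The plan is to decompose the global energy as $E_u(t)=E_{u_1}(t)+\sum_{i=2}^N E_{u_i}(t)$ and to control the first branch's energy $E_{u_1}$ by the damping term. Integrating the energy identity \eqref{E} in time is not directly what we want; instead I would start from the pointwise-in-time energy dissipation, namely that for a.e.\ $t$
\[
\frac{d}{dt} E_u(t) = -2\int_{\mathbb{R}_+} a(x)|u_1(t,x)|^2\,dx,
\]
which is the differential form of \eqref{E}. First I would split the damping integral over $\{x\le R\}$ and $\{x> R\}$ and use the lower bound \eqref{dampingh}, $a(x)\ge \alpha_0$ for $x>R$, to obtain
\[
2\alpha_0 \int_R^{+\infty} |u_1(t,x)|^2\,dx \le 2\int_{\mathbb{R}_+} a(x)|u_1(t,x)|^2\,dx = -\frac{d}{dt}E_u(t).
\]

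Next I would write $\int_R^{+\infty}|u_1(t,x)|^2\,dx = E_{u_1}(t) - \int_0^R |u_1(t,x)|^2\,dx$, so that
\[
2\alpha_0\, E_{u_1}(t) \le 2\alpha_0 \int_0^R |u_1(t,x)|^2\,dx - \frac{d}{dt}E_u(t).
\]
Dividing by $2\alpha_0$ and adding $\sum_{i=2}^N E_{u_i}(t)$ to both sides gives a pointwise bound on $E_u(t) = E_{u_1}(t)+\sum_{i=2}^N E_{u_i}(t)$:
\[
E_u(t) \le \sum_{i=2}^N E_{u_i}(t) + \int_0^R |u_1(t,x)|^2\,dx - \frac{1}{2\alpha_0}\frac{d}{dt}E_u(t).
\]
Finally I would integrate this inequality over $[0,t]$; the last term integrates to $-\frac{1}{2\alpha_0}\big(E_u(t)-E_u(0)\big)$ by the fundamental theorem of calculus (valid since $t\mapsto E_u(t)$ is absolutely continuous, being the integral of the dissipation, as \eqref{E} already guarantees), yielding exactly \eqref{ener}.

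The only delicate point is the justification of the pointwise (differential) energy identity for merely $L^2$ solutions obtained in Theorem~\ref{Th1}, since the formal computation underlying \eqref{E} uses integration by parts against $\partial_x^2 u_i$ and the coupling conditions $\sum_i \partial_x u_i(t,0)=0$ and $u_i(t,0)=u_j(t,0)$. I would handle this by a standard density/regularization argument: the integrated identity \eqref{E} is already taken as known (it is stated in the excerpt), and differentiating it shows $E_u$ is absolutely continuous with $E_u'(t) = -2\int_{\mathbb{R}_+} a(x)|u_1(t,x)|^2\,dx$ a.e.; alternatively one can work directly with the integrated form \eqref{E} at times $t$ and at an auxiliary time, and bound $\int_0^t \big(\int_R^\infty |u_1|^2\big)ds$ from the time integral of \eqref{E}. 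Everything else is elementary manipulation, so no real obstacle remains beyond this regularity bookkeeping.
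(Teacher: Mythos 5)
Your argument is correct and is essentially the paper's proof: the same splitting of the first branch's energy over $[0,R]$ and $[R,+\infty)$, the same use of $a\ge\alpha_0$ for $x>R$ together with $a\ge 0$, and the same appeal to the energy identity \eqref{E}. The only difference is that you detour through the differential form $\frac{d}{dt}E_u(t)=-2\int_{\mathbb{R}_+}a|u_1|^2\,dx$ and then re-integrate, which forces the regularity caveat you flag at the end, whereas the paper works directly with the time-integrated quantity $\int_0^t E_{u_1}(s)\,ds$ and substitutes \eqref{E} as it stands, so no differentiation (and no absolute-continuity discussion) is needed.
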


\begin{proof}
It is clear that:
\begin{equation}\label{prlem1}
\begin{array}{lll}
\int_0^tE_{u_1}(s)ds &=& \int_0^t\int_0^R|u_1(s,x)|^2dxds+\int_0^t\int_R^{+\infty}|u_1(s,x)|^2dxds\\
&\leq & \int_0^t\int_0^R|u_1(s,x)|^2dxds+\int_0^t\int_R^{+\infty}\frac{a(x)}{\alpha_0}|u_1(s,x)|^2dxds\\
&\leq & \int_0^t\int_0^R|u_1(s,x)|^2dxds+\int_0^t\int_0^{+\infty}\frac{a(x)}{\alpha_0}|u_1(s,x)|^2dxds\\
&= & \int_0^t\int_0^R|u_1(s,x)|^2dxds-\frac{1}{2\alpha_0}\big(E_u(t)-E_u(0)\big).\\
\end{array}
\end{equation}
Now, using the fact that $\dint_0^t E_u(s) ds= \dint_0^t \mathlarger{‎‎\sum}_{i=1}^N E_{u_i}(s)ds$, and (\ref{prlem1}), we obtain the desired result.
\end{proof}
Next, the following lemma is aimed to prove an estimate-type observability estimate.
\begin{lemma} \label{lem2}
Consider $\alpha=\lbrace 3,5 \rbrace$. Let $u$ be a solution associated to the system $(\ref{sys})$ with initial data $\varphi=(\varphi_1, \varphi_2,...,\varphi_N)\in L^2(\mathcal{R})$ satisfying $\|\varphi\|_{L^2(\mathcal{R})}\ll 1$ for the case $\alpha=5$. Then, for all $T\gg 1,$ there exists a positive constant $c$ which depends on $T$ such that the following inequality holds,
\begin{equation}\label{obs}
\int_0^T\int_0^{+\infty}a(x)|u_1(s,x)|^2dxds\geq c \, \left(\dint_0^T\sum_{i=2}^N E_{u_i}(s)ds+ \int_0^T\int_0^R|u_1(s,x)|^2dxds  \right).
\end{equation}
\end{lemma}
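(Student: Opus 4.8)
The inequality (\ref{obs}) is an observability estimate: we want to bound the interior terms $\int_0^T\sum_{i=2}^N E_{u_i}\,ds$ and $\int_0^T\int_0^R|u_1|^2\,dxds$ by the globally dissipated energy $\int_0^T\int_{\mathbb R_+}a(x)|u_1|^2\,dxds$. I would argue by contradiction. Suppose (\ref{obs}) fails for every constant; then there is a sequence of solutions $u^n=(u_1^n,\dots,u_N^n)$ with initial data $\varphi^n\in L^2(\mathcal R)$ (satisfying $\|\varphi^n\|_{L^2}\ll1$ in the case $\alpha=5$) such that
\[
\int_0^T\int_{\mathbb R_+}a(x)|u_1^n(s,x)|^2\,dxds \;\longrightarrow\;0,
\qquad
\int_0^T\sum_{i=2}^N E_{u_i^n}(s)\,ds+\int_0^T\int_0^R|u_1^n(s,x)|^2\,dxds \;=\;1 .
\]
By the energy identity (\ref{E}) the sequence is bounded in $L^\infty(0,T;L^2(\mathcal R))$, and by Corollary \ref{coro2} (resp. Corollary \ref{coro3}) it is bounded in the Strichartz space $L^q((0,T);L^p(\mathcal R))$ for every admissible pair; so after extracting a subsequence it converges weakly (and, on bounded spatial intervals, strongly via a local compactness/Aubin–Lions argument applied to the Duhamel representation) to a limit $u^\infty$. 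The Strichartz bounds let one pass to the limit in the nonlinear term $\lambda|u_i^n|^{\alpha-1}u_i^n$ (this is where the restriction $\alpha\in\{3,5\}$ and the smallness in the critical case are used), so $u^\infty$ solves (\ref{sys}).

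The first hypothesis forces $a(x)|u_1^\infty|^2\equiv0$ on $(0,T)\times\mathbb R_+$, hence by (\ref{dampingh}) we get $u_1^\infty\equiv0$ on $(0,T)\times\{x>R\}$. Feeding this back into the first equation of (\ref{sys}) on $x>R$ shows $u_1^\infty$ satisfies a homogeneous Schrödinger equation that vanishes on an open set, and a unique continuation argument (Holmgren or a Carleman estimate for $i\partial_t+\partial_x^2$, plus analyticity in $x$ away from the damping region) propagates the vanishing: $u_1^\infty\equiv0$ on all of $(0,T)\times\mathbb R_+$. In particular $u_1^\infty(t,0)=0$ and $\partial_x u_1^\infty(t,0)=0$. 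The transmission conditions $\sum_i\partial_x u_i^\infty(t,0)=0$ and $u_i^\infty(t,0)=u_j^\infty(t,0)$ then give, for the remaining branches $i\ge2$, a Schrödinger system on the star graph with $N-1$ edges all sharing the vertex value $0$ and with Neumann-type compatibility — effectively each $u_i^\infty$ ($i\ge2$) solves the free nonlinear Schrödinger equation on $(0,\infty)$. To conclude $u_i^\infty\equiv0$ one needs a further observability/unique-continuation input; I expect the cleanest route is to observe that the limit, having lost its damping, must have constant $L^2$ energy on each branch, combine the vertex conditions into a boundary unique-continuation statement at $x=0$, and invoke unique continuation for the 1-d Schrödinger operator to kill $u_i^\infty$ on every branch. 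Thus $u^\infty\equiv0$.

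Finally, the contradiction is reached by upgrading weak convergence to strong convergence of the observation functional. Write $u_i^n = e^{it\partial_x^2}\varphi_i^n + (\text{Duhamel nonlinear/damping terms})$; the nonlinear terms are small in the relevant norm uniformly in $n$ (using the Strichartz bounds and, for $\alpha=5$, the smallness of $\|\varphi^n\|_{L^2}$), and on the bounded set $\{0\le x\le R\}\cup\bigcup_{i\ge2}\mathcal R_i$ restricted to a finite window, the linear flow of a weakly convergent sequence converges strongly by the local smoothing / compactness of $e^{it\partial_x^2}$ on bounded intervals. Hence
\[
\int_0^T\sum_{i=2}^N E_{u_i^n}(s)\,ds+\int_0^T\int_0^R|u_1^n(s,x)|^2\,dxds
\;\longrightarrow\;
\int_0^T\sum_{i=2}^N E_{u_i^\infty}(s)\,ds+\int_0^T\int_0^R|u_1^\infty(s,x)|^2\,dxds \;=\;0,
\]
contradicting that this quantity equals $1$ for every $n$. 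Therefore (\ref{obs}) holds. The main obstacle is the unique continuation step: propagating the vanishing of $u_1^\infty$ from the region $\{x>R\}$ across the graph to all branches, which requires a unique continuation theorem for the (nonlinear) Schrödinger equation on $\mathbb R_+$ together with careful use of the Kirchhoff-type vertex conditions; the compactness needed to pass to the limit in the nonlinearity for the critical exponent $\alpha=5$ is the second delicate point and is precisely why the smallness assumption on $\|\varphi\|_{L^2(\mathcal R)}$ is imposed there.
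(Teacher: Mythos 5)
Your overall strategy (contradiction, Strichartz bounds, local compactness, unique continuation to kill the weak limit, then strong convergence of the observation term) is the same compactness--uniqueness scheme as the paper's, but two of its load-bearing steps do not hold. First, the passage to the limit in the \emph{full} observation functional fails: the quantity $\int_0^T\sum_{i\ge2}E_{u_i^n}(s)\,ds$ is an $L^2$ norm over $(0,T)\times(0,+\infty)$ on each undamped edge, and the set $\bigcup_{i\ge2}\mathcal{R}_i$ is \emph{not} bounded, so your appeal to ``local smoothing / compactness of $e^{it\partial_x^2}$ on bounded intervals'' gives nothing there; Aubin--Lions-type arguments only yield strong convergence in $L^2((0,T)\times(0,R'))$ for finite $R'$, and mass can escape to spatial infinity along the branches $i\ge2$, where no damping controls it. (A smaller but related slip: you cannot normalize the observation term to $1$ for a sequence of solutions of a \emph{nonlinear} equation without rescaling the equation itself; the paper keeps the quotient form (\ref{cont}) and, when the weak limit of $u_1^k$ vanishes, renormalizes only by $\beta_k=\|u_1^k\|^2_{L^2((0,T)\times(0,R))}$, which puts a factor $\beta_k^{\alpha-1}$ in front of the nonlinearity so that the limit equation becomes the free linear Schr\"odinger equation and Holmgren applies.) Note also that for the nonvanishing limit the paper does not use Holmgren or analyticity --- these are not available for the nonlinear equation --- but the $C^\infty$ smoothing effect for compactly supported data (Theorem \ref{Th3}) combined with the Escauriaza--Kenig--Ponce--Vega unique continuation theorem in the weighted spaces $H^1(e^{\beta|x|^\rho}dx)$ (Theorem \ref{Th4}); its whole contradiction argument is run on $u_1$ alone, on the bounded window $(0,R)$, where it has an $L^2((0,T);H^{1/2}(0,R))$ bound plus Lemma \ref{lem4}.

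Second, the step you leave open --- deducing $u_i^\infty\equiv0$ for $i\ge2$ from the vertex conditions --- cannot be closed. Vanishing of $u_1^\infty$ only gives $u_i^\infty(t,0)=0$ for every $i$ together with $\sum_{i\ge2}\partial_x u_i^\infty(t,0)=0$, and this does not force the individual branches to vanish: for $N\ge3$ take an odd-in-$x$ solution $w$ of the NLS on $\mathbb{R}$ (with small data if $\alpha=5$) and set $u_1\equiv0$, $u_2=w|_{x>0}$, $u_3=-w|_{x>0}$, $u_i\equiv0$ for $i\ge4$; then every equation in (\ref{sys}) and both vertex conditions are satisfied, the damping term $\int_0^T\int_0^{+\infty}a(x)|u_1|^2\,dx\,ds$ vanishes identically, yet $\sum_{i\ge2}E_{u_i}>0$. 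So no unique continuation statement will annihilate the limit on the outer branches, and no argument of the form ``limit is zero, hence the observation term tends to zero'' can succeed; whatever contradiction is available must be extracted from the $u_1$-part of the observation functional (the $L^2((0,T)\times(0,R))$ norm), which is how the paper proceeds. Be aware, though, that the paper's own proof is likewise silent about how the terms $E_{u_i}$, $i\ge2$, on the right-hand side of (\ref{obs}) are recovered --- its contradictions in both cases concern only $u_1$ --- so the difficulty you ran into at this point is intrinsic to the statement and not something you can import from the paper as written.
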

\begin{proof}
We argue par contradiction. We suppose that (\ref{obs}) is not true and let $\{(u^k_i)(0)\}_{k\in \mathbb{N}}, 1\leq i\leq N$ be a sequence of initial data attached with the solutions $\{u^k\}_{k\in \mathbb{N}}=\{(u^k_1, u^k_2,..., u^k_N)\}_{k\in \mathbb{N}},$ which is assumed to be uniformly bounded by a constant $C>0,$ verify
\begin{equation}\label{cont}
\dint_0^T\int_0^{+\infty}a(x)|u_1^k(s,x)|^2dxds\leq \frac{1}{k}\left(\dint_0^T\sum_{i=2}^N E_{u_i^k} (s)ds+ \int_0^T\int_0^R|u_1^k(s,x)|^2dxds  \right).
\end{equation}
On account of $ E_{u^k}(t) \leq E_{u^k}(0) \leq C,$ we obtain a subsequence of $\{u^k\}_{k\in  \mathbb{N}}$, still denoted by $\{u^k\}_{k\in  \mathbb{N}}$, which verifies the convergence
\begin{equation}
u^k \rightharpoonup u \;\; \mbox{ weakly in  } \;\; L^2((0,T); L^2(\mathcal{R})).
\end{equation}
Then, we deduce
\begin{equation}\label{3.6}
\lim_{k\rightarrow + \infty}\int_0^T\int_0^{+\infty}a(x)|u_1^k(s,x)|^2dxds=0.
\end{equation}
Consequently, we have
\begin{equation}\label{1}
\lim_{k\rightarrow +\infty}\int_0^T\int_R^{+\infty}|u_1^k(s,x)|^2dxds=0.
\end{equation}
On the other hand, by using Lemma \ref{lem1}, we deduce the existence of $T_1=T_1(C)>0$ ($T_1=T$ for the case $\alpha=5$) such that
\begin{equation}
\{u_1^k\}_{k\in \mathbb{N}} \;\; \mbox{ is bounded in } \;\; L^2((0,T_1); H^{1/2}(]0,R[)).
\end{equation}
Now, we consider the equation
\begin{equation}\label{eq}
i \partial_t u^k_{1}= - \partial^2_x u^k_{2} - \lambda |u^k_1|^{\alpha-1}u^k_1-ia(x)u^k_1, \;\;\; \mbox{in} \;\; \mathcal{D}^\prime ( (0,T_1)\times\mathbb{R}_+).
\end{equation}

First, we consider $\alpha=3$. Note that:

\medskip

\begin{itemize}
\item
The term $a(x)u^k_1$ is bounded in $L^6((0,T_1)\times \mathbb{R}_+)$ (Stricharz estimates) and $L^6((0,T_1)\times \mathbb{R}_+)\hookrightarrow L^2((0,T_1);H^{-2}_{loc}( \mathbb{R}_+)).$
\item
Similarly, the term $|u^k_1|^{2}u^k_1 \in L^2((0,T_1)\times \mathbb{R}_+)\hookrightarrow L^2((0,T_1);H^{-2}_{loc}( \mathbb{R}_+)).$
\item
Since $u^k_1$ is bounded in $L^2((0,T_1)\times \mathbb{R}_+)$ the term $\partial^2_x u^k_{2}$ is bounded in $L^2((0,T_1);H^{-2}_{loc}( \mathbb{R}_+)).$
\end{itemize}
The case $\alpha=5$ is analogous. The main difficulty is to deal with the nonlinear term $ |u^k_1|^{4}u^k_1$. By applying Theorem $\ref{Th1}$, Corollary $\ref{coro3}$ and Strichartz estimates, we have that  $ |u^k_1|^{4}u^k_1$ is bounded in $L^{{6}/{5}}((0,T)\times\mathbb{R}_{+})\hookrightarrow L^{{6}/{5}}((0,T);H_{loc}^{-2}\mathbb{R}_{+})$. the remainder of the conclusion is similar.

Thus, we deduce that $\partial_t u^k_{1}$ is bounded in $L^2((0,T_1);H^{-2}_{loc}( \mathbb{R}_+)),$ and we conclude, by using Lemma $\ref{lem4}$, the existence of a subsequence, still denoted by $\{u^k_1\}_{k\in \mathbb{N}}$, such that
\begin{equation}\label{2}
u^k_1\longrightarrow u_1 \;\; \mbox{ strong in} \;\; L^2((0,T_1)\times(0,R)).
\end{equation}
Besides we have $u^k_1 \rightarrow u_1$ a.e. in $ [0,T_1]\times [0,R].$ Using (\ref{1}) and (\ref{2}) we get
\begin{equation}
u^k_1 \rightarrow \tilde{u}_1 = \left\{\begin{array}{lll}
u_1, & & \mbox{ a.e. in } \;\; [0,T_1]\times [O,R], \\
0, & & \mbox{ a.e. in } \;\; [0,T_1]\times [R,+\infty[.\\
\end{array}\right.
\end{equation}
At this point, we will divide the proof into two cases.
\medskip

\item Case 1: $u\neq 0.$

\medskip
First, we consider the case $\alpha=3$, handling by the Strichartz inequalities we have $u^k_1$ is bounded in
$L^8((0,T_1),L^4(\mathbb{R}_+))\hookrightarrow L^4((0,T_1)\times \mathbb{R}_+),$ for all $k\in \mathbb{N}$.\\Therefore, $\{|u_1^k|^2u_1^k\}_{k\in \mathbb{N}}$ is bounded in $L^{4/3}((0,T_1),L^4(\mathbb{R}_+))$. Consequently, we obtain $|u_1^k|^2u_1^k \rightharpoonup |\tilde{u}_1|^2\tilde{u}_1$ in $L^{4/3}((0,T_1),L^4(\mathbb{R}_+))$.
Now, we can pass to the limit in (\ref{eq}), we find
\begin{equation}\label{limit}
 \left\{\begin{array}{lll}
i \partial_t u_{1} + \partial^2_x u_{1} + \lambda |u_1|^2u_1 = 0 & & \mbox{ a.e. in } \;\; [0,T_1]\times [0,R], \\
u_1=0, & & \mbox{ a.e. in } \;\; [0,T_1]\times [R,+\infty[.\\
\end{array}\right.
\end{equation}
Furthermore, since $u_1\in L^2((0,T_1);L^2(\mathbb{R}_+)),$ there is $t_0\in (0,T_1)$ such that $u_1(t_0,.)\in L^2(\mathbb{R}_+)$  and therefore $u_1\in \mathcal{C}((t_0,T_1); L^2(\mathbb{R}_+))\cap L^2((t_0,T_1); L^{\infty}(\mathbb{R}_+)).$ Incoming, we see that $u_1$ is a mild solution with initial data $\varphi_1$ having compact support and we use Theorem \ref{Th3} (see appendix) to conclude that $u_1\in \mathcal{C}^{\infty}((0,T_1)\times \mathbb{R}_+)$ with $u_1(t,x)=0,$ for all $(t,x)\in (0,T_1)\times (R,+\infty).$ Consequently, we obtain $u_1(t_1,.),$ $u_1(t_2,.)\in H^1(e^{\beta|x|^{\rho}}dx)$, for all $\beta>0$ and $\rho >2.$ Which gives $u_1\equiv 0$ in $ (0,T_1)\times [0,R]$ (see Theorem \ref{Th4} in appendix). We obtain finally $u_1\equiv 0$ in $ (0,T_1)\times\mathbb{R}_+.$

\medskip

In the sequel, we have

\begin{equation}\label{var} \begin{array}{lll}
\|\varphi_1^k\|_{L^2(\mathbb{R}_+)}^2=E_{u_{1}^k}(0) &=&E_{u_{1}^k}(T_1) + 2\dint_0^{T_1}\dint_0^{+\infty}a(x)|u_1^k(s,x)|^2dsdx\\
&\leq & \frac{1}{T_1}\dint_0^{T_1}E_{u_{1}^k}(s)ds + 2\dint_0^{T_1}\dint_0^{+\infty}a(x)|u_1^k(s,x)|^2 dsdx.\\
\end{array}
\end{equation}
Using estimations $(\ref{prlem1})$, we obtain
\begin{equation}
\|\varphi_1^k\|_{L^2(\mathbb{R}_+)}^2=E_{u_{1}^k}(0) \leq\left( 2+\frac{1}{T_{1}\alpha_{0}}\right) \dint_0^{T_1}\dint_0^{+\infty}a(x)|u_1^k(s,x)|^2 dxds+
\end{equation}
$$
\frac{1}{T_{1}}\dint_0^{T_1}\dint_0^{R}|u_1^k(s,x)|^2 dxds.
$$
Using $(\ref{3.6})$ and the fact that \begin{equation*}
u_{1}^{k}\longrightarrow 0 \qquad \text{in}\; L^{2}\left((0,T_{1})\times\mathbb{R} \right).
\end{equation*}
We get
\begin{equation}\label{eqE}
E_{u_{1}^k}(0)\underset{k\to+\infty}{\longrightarrow}\ 0.
\end{equation}

Therefore, since $\|u_1^k(t)\|^2_{L^2(\mathbb{R}_+)}\leq E_{u_{1}^k}(0)$, for all $t\in [0,T],$ we deduce that $u_1\equiv 0$ in $[0,T]\times \mathbb{R}_+.$
This gives a contradiction.\\
Now, we consider $\alpha=5$. The procedure is very similar. The main difference is to justify the passage to the limit at
system (\ref{limit}). Handling by the Strichartz inequalities and since $\Vert\varphi_{1}\Vert_{L^{2}(\mathbb{R_{+}})}\ll 1$, we have $u_{1}^{k}$ in bounded in $L^{6}((0,T)\times\mathbb{R}_{+})$, for all $k\in\mathbb{N}$. Therefore,  $ \lbrace|u^k_1|^{4}u^k_1\rbrace_{k\in\mathbb{N}}$ is bounded in $L^{{6}/{5}}((0,T)\times\mathbb{R}_{+})$. The remainder of the proof follows similarly as determinate in the case $\alpha=3$.
\medskip

\item Case 2: $u_1\equiv 0.$

\medskip

Let $\beta_k=\|u_1^k\|^2_{L^2((0,T)\times(0,R))}$.

\medskip

We have, by using Lebesgue's Dominated Convergence Theorem, that\begin{equation}\label{LDC}
\lim_{k\rightarrow+\infty}\beta_k=0.
\end{equation}
 Also, $v^k_1=\frac{u^k_1}{\beta_k}$ satisfies $\|v_1^k\|^2_{L^2((0,T)\times(0,R))}=1$ and verifies:
\begin{equation}\label{eqv}
i \partial_t v^k_{1} + \partial^2_x v^k_{2}+\lambda \beta_k^{2}|v^k_1|^{\alpha-1}v^k_1+ia(x)v^k_1 = 0, \;\; \mbox{in} \;\; \mathcal{D}^\prime ( (0,T_1)\times\mathbb{R}_+).
\end{equation}

By virtue of (\ref{cont}) and (\ref{LDC}) we get:
\begin{equation}\label{vr}
\lim_{k\rightarrow+\infty}\dint_0^{T}\int_0^{+\infty}a(x)|v_1^k(s,x)|^2dxds=0.
\end{equation}
Using the fact that $a(x)>\alpha_0>0$, we deduce from (\ref{vr}):
\begin{equation}\label{19}
\lim_{k\rightarrow+\infty}\dint_0^{T}\int_R^{+\infty}|v_1^k(s,x)|^2dxds=0,
\end{equation}
and this allow to find $v_1^k\rightarrow 0$ in $L^2\Big((0,T);L^2([R,+\infty[)\Big).$
Using the same arguments as in the case $u_1\neq 0$, (also in this case, we need to infer a bound for the nonlinear term  $ \lbrace|v^k_1|^{\alpha-1}v^k_1\rbrace_{k\in\mathbb{N}}$ in $L^m((0,T_2)\times \mathbb{R}_+)$, where $m={4}/{3}$ for $\alpha=3$ and $m={6}/{5}$ for $\alpha=5$), we conclude that $\partial_t v^k_{1}$, for some $T_2>0 \;(T_{2}=T, if \alpha=5)$, is bounded in $L^2((0,T_2);H^{-2}_{loc}( \mathbb{R}_+))$ and we get a function $\tilde{v}_1$ which verifies:

\begin{itemize}
\item
$v^k_1 \rightharpoonup \tilde{v}_1$ weakly in $L^2\Big((0,T_2),L^2(\mathbb{R}_+)\Big),$
\item
$\tilde{v}_1$ satisfies
\begin{equation}
\tilde{v}_1=\left\{\begin{array}{lll} v_1,  \;\;\; &\mbox{ a.e. in}& \;\; (0,T_2)\times [0,R],\\
0,\;\;  &\mbox{ a.e. in}&  (0,T_2)\times [R,+\infty[,
\end{array}\right.
\end{equation}
where $v_1$ is a solution of
\begin{equation}\label{v1}
\left\{\begin{array}{lll} i \partial_t v_{1} + \partial_x^2 v_{1} = 0,  \;\; &\mbox{ in}& \;\; \mathcal{D}^\prime \big((0,T_2)\times \mathbb{R}_+\big),\\
v_1=0,\;\;  &\mbox{ a.e. in}&  (0,T_2)\times [R,+\infty[.\\
\end{array}\right.
\end{equation}
\end{itemize}

Thanks to Holmogren's Theorem and (\ref{v1}) we obtain $v\equiv 0$ in $(0,T_2)\times [0,R].$ On the other hand, we use the fact that $v_1^k$ is bounded in $L^2\Big((0,T_2);H^{1/2}(0,R)\Big)$ and Aubin-Lions's Lemma (Lemma \ref{lem4} in appendix) to infer that:
\begin{equation}\label{convv}
v^k_1 \rightarrow 0  \;\; \mbox{ strong in}  \;\; L^2\Big((0,T_2)\times [0,R]\Big).
\end{equation}
Combining (\ref{19}) with (\ref{convv}) we get:
\begin{equation}
v^k_1 \rightarrow 0  \;\; \mbox{ strong in}  \;\; L^2\Big((0,T_2)\times [0,+\infty[\Big).
\end{equation}
Finally, we use the same arguments used to compute the limit in (\ref{eqE}) to obtain
\begin{equation}
v^k_1 \rightarrow 0  \;\; \mbox{ strong in}  \;\; L^2\Big((0,T)\times [0,+\infty[\Big).
\end{equation}
This is in contradiction with $\|v_1^k\|^2_{L^2((0,T)\times(0,R))}=1$.

\end{proof}
Now, we prove our main stability result (\ref{decay}).
\begin{proof}[Proof of Theorem \ref{theodec}]
Combining (\ref{ener}) and (\ref{obs}), we get:
\begin{equation}\begin{array}{lll}
\int^T_0E_u(s)ds&\leq & -\frac{1}{2c}\int^T_0\int^{+\infty}_0a(x)|u_1(s,x)|^2dxds -\frac{1}{2\alpha_0}\Big(E_u(T)-E_u(0)\Big) \\
&=& -\Big( \frac{1}{2c}+\frac{1}{2\alpha_0}\Big)\Big(E_u(T)-E_u(0)\Big).
\end{array}
\end{equation}
Using the fact that $t\mapsto E_u (t)$ is a nonincreasing function we deduce:
\begin{equation}
TE_u (T)\leq \int^T_0 E_u (s)ds \leq - C(T) \Big(E_u (T)-E_u (0)\Big),
\end{equation}
where $C(T):=\Big( \frac{1}{2c}+\frac{1}{2\alpha_0}\Big).$\\
This implies that
\begin{equation}
E_u(T) \leq \frac{C(T)}{C(T)+T}E_u(0).
\end{equation}
Finally, using the semigroup property, we obtain the exponential decay.
\end{proof}
\section*{Appendix}\label{Appendix}
In this appendix, we present some useful results used in the paper.
Let us first recall a result establishing a $C^{\infty}$ smoothing effect of nonlinear Sch\"{o}dinger equation.
\begin{theorem} (\cite[Theorem $5.7.3$]{Cazenav}) \label{Th3}
Consider $T>0$, $\lambda=\pm 1$ and $\alpha$ an odd positive number. Let $v$ be the global solution in $ C\left( \left[0,T \right];L^{2}(\mathcal{R}) \right) \cap  L^2\big((0,T);L^{\infty}(\mathcal{R})\big)$ of
\begin{equation}\label{IVP}
 \left\{\begin{array}{lll}
i \partial_t v + \partial^2_x v + \lambda |v|^{\alpha-1}v = 0,  &\qquad  (0,T)\times\mathbb{R},\\
v(x,0)=v_{0},   &\qquad x\in \mathbb{R}.
\end{array}\right.
\end{equation}
Then, $v\in C^{\infty}([0,T]\times\mathbb{R})$ for all $v_{0}\in L^{2}(\mathbb{R})$ with a compact support.
\end{theorem}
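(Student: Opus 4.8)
The plan is to exploit the \emph{pseudo-conformal} (Galilean) vector field $J(t):=x+2it\partial_x$, which commutes with the free Schr\"odinger operator $L:=i\partial_t+\partial_x^2$, together with the fact that for $\alpha$ odd the nonlinearity $|v|^{\alpha-1}v=(v\bar v)^{(\alpha-1)/2}v$ is a genuine polynomial in $(v,\bar v)$, so it may be differentiated and estimated by Leibniz-type rules. The engine of the argument is the factorization $J(t)=2it\,e^{ix^2/4t}\partial_x\!\left(e^{-ix^2/4t}\,\cdot\right)$, valid for $t>0$, which converts control of $J(t)^k v$ in $L^2$ into control of $k$ spatial derivatives of $v$, thereby trading the spatial decay of the compactly supported datum $v_0$ for regularity of $v(t,\cdot)$ at positive times.

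First I would record the two algebraic facts above: the commutator identity $[L,J(t)]=0$ (a direct computation, the $-2\partial_x$ coming from $[i\partial_t,2it\partial_x]$ cancelling the $2\partial_x$ coming from $[\partial_x^2,x]$) and the factorization identity. Since $v_0$ has compact support, $x^k v_0\in L^2(\mathbb{R})$ for every $k\ge 0$, that is $J(0)^k v_0=x^k v_0\in L^2$; this is the initial decay input that the flow will propagate.

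Next I would set up the energy estimates by induction on $k$. Applying $J(t)^k$ to the equation and using $[L,J]=0$ gives $L\big(J^k v\big)=-\lambda\,J^k\big(|v|^{\alpha-1}v\big)$. Because the nonlinearity is polynomial, a Leibniz rule for $J$ expresses $J^k\big(|v|^{\alpha-1}v\big)$ as a sum of products of factors $J^{j}v$, $\overline{J^{j}v}$ (and lower powers of $x$) with $j\le k$, each controlled by the already-known lower-order norms and by $\|v\|_{L^\infty}$, the latter furnished by the hypothesis $v\in L^2((0,T);L^\infty(\mathbb{R}))$. A standard $L^2$ energy estimate together with Gronwall's inequality then closes the induction, yielding $J(t)^k v\in L^\infty_{loc}((0,T);L^2(\mathbb{R}))$ for all $k$. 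By the factorization, $\partial_x^k\!\big(e^{-ix^2/4t}v\big)\in L^2$ for each $t>0$; since $e^{ix^2/4t}$ is smooth and unimodular on time intervals bounded away from $0$, this gives $v(t,\cdot)\in H^k(\mathbb{R})$ for every $k$, hence $v(t,\cdot)\in C^\infty(\mathbb{R})$ by Sobolev embedding.

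Finally, to upgrade spatial smoothness to joint smoothness in $(t,x)$ I would use the equation itself to trade time derivatives for space derivatives: $\partial_t v=i\partial_x^2 v+i\lambda|v|^{\alpha-1}v$, where both terms on the right are $C^\infty$ in $x$ with locally bounded norms, and then iterate to control every mixed derivative $\partial_t^m\partial_x^k v$, a uniform-on-compact-subintervals argument supplying the continuity. I expect the main obstacle to be closing the high-order nonlinear energy estimates: one must establish the time-weighted commutator/Leibniz bounds for $J^k$ acting on the polynomial nonlinearity and track how the constants degenerate as $t\to 0$. This degeneration is unavoidable, since the factorization blows up at $t=0$, so the genuine conclusion is interior smoothing on $(0,T)\times\mathbb{R}$, which is exactly the form in which the theorem is invoked (to obtain $u_1\in\mathcal{C}^\infty((0,T_1)\times\mathbb{R}_+)$).
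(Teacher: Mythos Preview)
The paper does not supply a proof of this statement; it is quoted verbatim from Cazenave's monograph (Theorem~5.7.3 there) and used as a black box in the proof of Lemma~\ref{lem2}. So there is no ``paper's own proof'' to compare against. That said, your outline is essentially the standard argument one finds in Cazenave: exploit the pseudo-conformal generator $J(t)=x+2it\partial_x$, its commutation with $i\partial_t+\partial_x^2$, and the factorization $J(t)=2it\,M(t)\,\partial_x\,M(-t)$ with $M(t)=e^{ix^2/4t}$, to convert the weighted $L^2$ control available on the compactly supported datum into $H^k$ regularity of $v(t,\cdot)$ for every $k$ and every $t>0$, then bootstrap time-regularity from the equation.

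One point you should make precise: $J$ does not act cleanly on $\bar v$ (indeed $J\bar v\neq\overline{Jv}$), so the phrase ``Leibniz rule for $J$ on $|v|^{\alpha-1}v$'' needs to go through the gauge variable $w:=M(-t)v$. Since $|v|=|w|$, one has $|v|^{\alpha-1}v=M(t)\,|w|^{\alpha-1}w$, whence $J^k\big(|v|^{\alpha-1}v\big)=(2it)^k M(t)\,\partial_x^k\big(|w|^{\alpha-1}w\big)$; the ordinary Leibniz rule for $\partial_x^k$ on the genuine polynomial $|w|^{\alpha-1}w$ (this is where $\alpha$ odd is used) then produces products of $\partial_x^j w$ and $\overline{\partial_x^j w}$, i.e.\ of $J^j v$ and $\overline{J^j v}$ up to unimodular factors and powers of $t$. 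With this in hand your induction/Gronwall scheme closes on compact subintervals of $(0,T)$, using $v\in L^2_t L^\infty_x$ to control the lower-order factors. Your observation that the constants degenerate as $t\downarrow 0$, so that the conclusion is really interior smoothing on $(0,T)\times\mathbb{R}$, is correct and matches exactly how the result is invoked in the paper (to get $u_1\in C^\infty((0,T_1)\times\mathbb{R}_+)$).
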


We have the same result for star-shaped and tadpole
graph geometries, see\cite{amm1} and \cite{amm1bis} respectively, for more details.

\medskip

We recall also the following unique continuation theorem for regular solutions of the nonlinear
equation in $(\ref{IVP})$. This result is more general in the sense that it deserves for nonlinear
Schr\"{o}dinger equation in the domain $(x, t)\in \mathbb{R}^{n}\times[0,T ], n\geq1$, with a general nonlinearity $F(v, \bar{v})$. In such case, we must consider $k \in\mathbb{Z}^{+}$ satisfying $k>\frac{n}{2}+1$.\\
Let us define the weighted Sobolev space $ H^{1}\left( e^{\beta\vert x\vert^{\rho}}dx\right)$, as\begin{equation*}
  H^{1}\left( e^{\beta\vert x\vert^{\rho}}dx\right)=\left\lbrace g;\int_{\mathbb{R}}\left\vert g(x)\right\vert^{2} e^{\beta\vert x\vert^{\rho}}dx+\int_{\mathbb{R}}\left\vert g'(x)\right\vert^{2} e^{\beta\vert x\vert^{\rho}}dx <\infty\right\rbrace .
\end{equation*}
\begin{theorem} (\cite[Theorem 2.1]{Escauriaza}) \label{Th4}
Let $w\in C\left([0,T];H^{k}(\mathbb{R}) \right) $, $k\in \mathbb{Z}^{+}$, $k>{3}/{2}$ be a strong solution of the equation in $(\ref{IVP})$  in the domain $(x, t)\in \mathbb{R}\times[0,T]$. If there is $t_{1},t_{2}\in[0,T], t_{1}\neq t_{2}$, $\rho>2$ and $\beta>0$ such that
\begin{equation*}
w(.,t_{1}), w(.,t_{2})\in H^{1}\left( e^{\beta\vert x\vert^{\rho}}dx\right),
\end{equation*}
then $w\equiv 0$.
\end{theorem}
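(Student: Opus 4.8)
The plan is to read Theorem~\ref{Th4} as a dynamical uncertainty-principle statement for the Schr\"odinger evolution and to prove it by the Carleman-estimate method of Escauriaza--Kenig--Ponce--Vega. The first step is to linearize: since $w\in C([0,T];H^k(\R))$ with $k>3/2$, the embedding $H^k(\R)\hookrightarrow L^\infty(\R)$ gives $w\in C([0,T];L^\infty(\R))$, so that $V(x,t):=\lambda|w(x,t)|^{\alpha-1}$ is a bounded, possibly complex-valued potential and $w$ solves the \emph{linear} equation $i\partial_t w+\partial_x^2 w+Vw=0$ with $V\in L^\infty(\R\times[0,T])$. The problem is thereby reduced to showing that a solution of a linear Schr\"odinger equation with bounded potential which possesses super-Gaussian spatial decay (rate $e^{-\tfrac{\beta}{2}|x|^\rho}$ with $\rho>2$) at two distinct times must vanish identically.

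Next I would establish an \emph{upper} bound, propagating the endpoint decay to the whole interval. Multiplying the equation by $\bar w\,e^{2\gamma\phi}$, with a truncated time-dependent convex weight $\phi\approx|x|^\rho$, and integrating in space, a Gronwall argument (using $V\in L^\infty$ to control the resulting error terms) bounds $t\mapsto\int_\R|w(x,t)|^2e^{2\gamma\phi(x,t)}\,dx$ on $[t_1,t_2]$ in terms of its finite values at the endpoints. This shows that the assumed super-Gaussian decay persists, in a quantitative way, across the whole time strip.

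The core of the argument is a matching \emph{lower} bound supplied by a Carleman inequality of the schematic form
\[
\lambda^{3/2}\,\|e^{\lambda\phi}g\|_{L^2(\R\times[t_1,t_2])}\le C\,\bigl\|e^{\lambda\phi}(i\partial_t+\partial_x^2)g\bigr\|_{L^2(\R\times[t_1,t_2])},
\]
valid for all large $\lambda$ and weights $\phi$ adapted to the evolution, applied to $g=\theta w$ with a spatial cutoff $\theta$. Since $V$ is bounded, the term $e^{\lambda\phi}Vw$ is absorbed on the left once $\lambda$ is large, while the commutator $[\,\partial_x^2,\theta\,]w$ is supported where $w$ is already known to be small; the inequality then forces any nontrivial $w$ to decay no faster than a fixed Gaussian. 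Confronting this fixed Gaussian lower rate with the super-Gaussian ($\rho>2$) upper rate from the previous step yields a contradiction unless $w\equiv0$ on a space-time strip, after which backward and forward uniqueness for the linear Cauchy problem with bounded potential propagates the vanishing to all of $\R\times[0,T]$.

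I expect the genuine obstacle to be the construction of the Carleman weight and the precise bookkeeping of exponents, rather than the soft reduction steps. One must select $\phi$ convex in the appropriate quadratic-form sense so that the symmetric part of the conjugated operator $e^{\lambda\phi}(i\partial_t+\partial_x^2)e^{-\lambda\phi}$ is positive definite, and then verify that the Gaussian decay rate forced from below (coming from the quadratic part of $\phi$) is strictly beaten by the hypothesis $\rho>2$. Making the weighted energy identities and the cutoff error terms rigorous in the presence of the time-dependent potential $V$, and checking that the smallness regions arising from the upper and lower bounds overlap, is where the real work of the proof resides.
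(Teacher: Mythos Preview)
The paper does not prove Theorem~\ref{Th4} at all: it is stated in the appendix as a quoted result, with an explicit citation to \cite[Theorem~2.1]{Escauriaza}, and is used as a black box in the proof of Lemma~\ref{lem2}. There is therefore no ``paper's own proof'' to compare against.

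That said, your outline is essentially the strategy of the cited Escauriaza--Kenig--Ponce--Vega paper: linearize by setting $V=\lambda|w|^{\alpha-1}\in L^\infty$, propagate the super-Gaussian decay from the two times $t_1,t_2$ to the whole strip via a weighted energy/Gronwall argument (the ``upper bound''), and then derive a conflicting Gaussian lower bound from a Carleman estimate for $i\partial_t+\partial_x^2$ with a convex weight, absorbing the bounded potential for large Carleman parameter. Your identification of the genuine difficulty --- the construction of the weight $\phi$ and the precise exponent bookkeeping so that the forced Gaussian rate is strictly beaten by the hypothesis $\rho>2$ --- is accurate; in the original reference this is handled through a family of carefully scaled parabolic weights and a conformal/Appell transform, and that machinery is not something one can fill in from your sketch without substantial additional work. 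As a proof \emph{plan} it is correct and aligned with the source; as a proof it is, as you yourself note, only the soft reduction steps.
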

Now, notice the following lemmas.
\begin{lemma}(Lions'Lemma \cite[Lemma 1.3]{AL}) \label{lem3}
Let $\omega$ be an open bounded subset of $\mathbb{R}\times \mathbb{R}$. Consider $\lbrace f_{n}\rbrace_{n\in\mathbb{N}}$ a sequence in $L^{q}(\omega)$, $1<q<\infty$, satisfying $\Vert f_{n}\Vert_{L^{q}(\omega)}\leq C$ and $f_{n}\longrightarrow f$ a.e. in $\omega$. Thus $f_{n}\rightharpoonup f$ in  $L^{q}(\omega)$, as $n\longrightarrow +\infty$.
\end{lemma}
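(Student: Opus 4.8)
The plan is to prove the weak convergence directly via Egorov's theorem, exploiting that $\omega$ has finite measure and that $1<q<\infty$ makes the conjugate exponent $q'=q/(q-1)$ finite; this avoids any subsequence-extraction argument and identifies the limit in one pass.

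First I would record that the limit $f$ already lies in $L^{q}(\omega)$. Since $f_{n}\to f$ a.e. and $\Vert f_{n}\Vert_{L^{q}}\le C$, Fatou's lemma gives $\int_{\omega}|f|^{q}\le\liminf_{n}\int_{\omega}|f_{n}|^{q}\le C^{q}$, so $f\in L^{q}(\omega)$ with $\Vert f\Vert_{L^{q}}\le C$. In particular the duality pairing used below is well defined.

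Next, I would fix an arbitrary test function $\phi\in L^{q'}(\omega)$; by definition of weak convergence it suffices to show $\int_{\omega}f_{n}\phi\to\int_{\omega}f\phi$. Let $\delta>0$. Because $|\phi|^{q'}$ is integrable, the set function $A\mapsto\int_{A}|\phi|^{q'}$ is absolutely continuous, so there is $\eta>0$ with $\Vert\phi\Vert_{L^{q'}(A)}<\delta/(4C)$ whenever $|A|<\eta$. Since $\omega$ is bounded we have $|\omega|<\infty$, and as $f_{n}\to f$ a.e., Egorov's theorem yields a measurable set $E\subset\omega$ with $|\omega\setminus E|<\eta$ on which $f_{n}\to f$ uniformly. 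Splitting $\int_{\omega}(f_{n}-f)\phi$ over $E$ and $\omega\setminus E$, Hölder's inequality together with the uniform bound and the Fatou estimate gives on the bad set
\[
\left|\int_{\omega\setminus E}(f_{n}-f)\phi\right|\le\big(\Vert f_{n}\Vert_{L^{q}}+\Vert f\Vert_{L^{q}}\big)\,\Vert\phi\Vert_{L^{q'}(\omega\setminus E)}\le 2C\cdot\frac{\delta}{4C}=\frac{\delta}{2},
\]
while on the good set, writing $\varepsilon_{n}:=\sup_{E}|f_{n}-f|\to0$ and using $\Vert\phi\Vert_{L^{1}(E)}\le|\omega|^{1/q}\Vert\phi\Vert_{L^{q'}}$, one gets $\big|\int_{E}(f_{n}-f)\phi\big|\le\varepsilon_{n}|\omega|^{1/q}\Vert\phi\Vert_{L^{q'}}\to0$. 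Hence for all large $n$ we have $\big|\int_{\omega}(f_{n}-f)\phi\big|<\delta$, and since $\delta>0$ and $\phi\in L^{q'}(\omega)$ were arbitrary, this is exactly $f_{n}\rightharpoonup f$ in $L^{q}(\omega)$.

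The main obstacle is the contribution from the bad set $\omega\setminus E$, where no pointwise control on $f_{n}-f$ is available. The argument hinges on combining the uniform $L^{q}$-bound (to bound $\Vert f_{n}-f\Vert_{L^{q}(\omega\setminus E)}$ by $2C$ through the triangle inequality) with the absolute continuity of $A\mapsto\int_{A}|\phi|^{q'}$; the latter requires $q'<\infty$, i.e. $q>1$, which is precisely the hypothesis that excludes the endpoint $q=1$ (where the conclusion would instead require uniform integrability of $\{f_{n}\}$).
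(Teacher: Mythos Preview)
Your proof is correct. Note, however, that the paper does not supply its own proof of this lemma: it is simply quoted in the Appendix with a reference to Lions~\cite[Lemma~1.3]{AL}, so there is nothing to compare against in the paper itself.

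For context, the textbook argument typically proceeds by subsequence extraction: since $\{f_{n}\}$ is bounded in the reflexive space $L^{q}(\omega)$, every subsequence has a further subsequence converging weakly to some $g\in L^{q}(\omega)$; the a.e.\ convergence then forces $g=f$ (e.g.\ via Mazur's lemma and a further a.e.\ subsequence, or by testing against $C_{c}(\omega)$ functions and using Egorov on compact sets), and the usual subsequence principle yields weak convergence of the full sequence. Your Egorov-based route is more direct: it identifies the weak limit in a single pass, with no appeal to reflexivity or weak compactness, and it makes transparent exactly where the hypothesis $q>1$ enters (through $q'<\infty$ and the absolute continuity of $A\mapsto\int_{A}|\phi|^{q'}$). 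Both arguments are standard; yours is slightly more elementary.
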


\begin{lemma} (Aubin-Lions'Lemma \cite[Corollary 4]{SJ})\label{lem4}
Let $X_{0}, X$, and $X_{1}$ be three Banach spaces with $X_{0}\subset X\subset X_{1}$. Suppose that $X_{0}$ is compactly embedded in $X$ and that $X$ is continuously embedded in $X_{1}$. Suppose also that $X_{0}$ and $X_{1}$ are reflexive spaces. For $1<p,q<\infty$, let $W=\left\lbrace u\in L^{p}((0,T);X_{0}), \frac{du}{dt}\in L^{q}((0,T);X_{1})\right\rbrace $. Then the embedding of $W$ into $L^{p}((0,T);X)$ is compact.
\end{lemma}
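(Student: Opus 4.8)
The plan is to prove compactness of the embedding by showing that any bounded sequence $\{u_n\}_n$ in $W$ admits a subsequence converging strongly in $L^p((0,T);X)$. Two ingredients drive the argument: an interpolation (Ehrling-type) inequality extracted from the compact embedding $X_0\hookrightarrow X$, and a vector-valued Fr\'echet--Kolmogorov / Simon compactness criterion in the Bochner space $L^p((0,T);X_1)$. Throughout I write $M:=\sup_n\big(\|u_n\|_{L^p((0,T);X_0)}+\|u_n'\|_{L^q((0,T);X_1)}\big)<\infty$.

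First I would establish the interpolation inequality: for every $\varepsilon>0$ there is $C_\varepsilon>0$ with
\[
\|v\|_X \leq \varepsilon\,\|v\|_{X_0} + C_\varepsilon\,\|v\|_{X_1}\qquad\text{for all } v\in X_0 .
\]
This is proved by contradiction: were it to fail for some $\varepsilon_0$, one produces $v_n\in X_0$ normalized by $\|v_n\|_X=1$ with $\|v_n\|_{X_0}\le 1/\varepsilon_0$ and $\|v_n\|_{X_1}\to0$; the compact embedding $X_0\hookrightarrow X$ then gives $v_n\to v$ in $X$ with $\|v\|_X=1$, while the continuous embedding $X\hookrightarrow X_1$ forces $v=0$, a contradiction. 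Applying this pointwise in $t$ and taking $L^p$-norms yields, for any two indices,
\[
\|u_n-u_m\|_{L^p((0,T);X)} \leq 2\varepsilon M + C_\varepsilon\,\|u_n-u_m\|_{L^p((0,T);X_1)} .
\]
Hence it suffices to extract a subsequence that is Cauchy in $L^p((0,T);X_1)$: choosing $\varepsilon$ small and letting the subsequence index grow makes the right-hand side arbitrarily small, so that subsequence is Cauchy, hence convergent, in $L^p((0,T);X)$.

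The core is therefore relative compactness of $\{u_n\}$ in $L^p((0,T);X_1)$, which I would obtain from the Fr\'echet--Kolmogorov criterion. A uniform bound in $L^\infty((0,T);X_1)$ follows by choosing for each $n$ a time $t_n$ with $\|u_n(t_n)\|_{X_0}\le M\,T^{-1/p}$ and writing $u_n(t)=u_n(t_n)+\int_{t_n}^t u_n'(s)\,ds$, where $q>1$ and the continuous embeddings control the integral. Equicontinuity in time comes from the derivative bound: from $u_n(t+h)-u_n(t)=\int_0^h u_n'(t+s)\,ds$ and Minkowski's integral inequality,
\[
\|\tau_h u_n-u_n\|_{L^q((0,T-h);X_1)} \leq h\,\|u_n'\|_{L^q((0,T);X_1)} \leq hM ,
\]
uniformly in $n$. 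Finally, the time-averages $\tfrac1h\int_t^{t+h}u_n(s)\,ds$ are bounded in $X_0$ by H\"older, hence relatively compact in $X_1$ via the embedding $X_0\hookrightarrow X_1$, which is compact as the composition of the compact $X_0\hookrightarrow X$ with the continuous $X\hookrightarrow X_1$. These facts feed the compactness criterion and deliver a subsequence convergent in $L^p((0,T);X_1)$.

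The main obstacle is the mismatch between $p$ and $q$: the translation estimate is natural in the $L^q$-in-time norm (since $u_n'\in L^q$), whereas compactness is sought in the $L^p$-in-time norm, so the equicontinuity must be upgraded --- e.g.\ by interpolating the uniform $L^\infty((0,T);X_1)$ bound against the $L^q$ translation smallness, or by using the sharper form of the criterion that only tests averages. This transfer, together with the role of reflexivity of $X_0$ and $X_1$ (used to extract weak limits in the reflexive spaces $L^p((0,T);X_0)$ and $L^q((0,T);X_1)$ and to identify the limit and its distributional derivative), is precisely where the delicate work lies; it is carried out carefully in \cite[Corollary 4]{SJ}, to which I would appeal for these technical points.
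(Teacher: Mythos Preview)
The paper does not prove this lemma at all: it is stated in the Appendix as a classical result and attributed directly to \cite[Corollary 4]{SJ}, with no argument given. So there is no ``paper's own proof'' to compare against; the authors simply quote Simon's compactness theorem as a black box.

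Your sketch is a reasonable outline of the standard proof of Aubin--Lions (Ehrling's inequality to reduce to compactness in $L^p((0,T);X_1)$, then a Fr\'echet--Kolmogorov/Simon criterion via translation estimates and relative compactness of time-averages). You correctly identify the genuine technical point --- passing from the $L^q$-in-time translation smallness to $L^p$-in-time equicontinuity --- and you ultimately defer to \cite[Corollary 4]{SJ} for it, which is exactly what the paper does from the outset. In short, your proposal supplies strictly more detail than the paper, and the two agree on the final appeal to Simon.
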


\end{document}